\documentclass[a4paper,10pt,leqno]{amsart}
\title{Localization, Whitehead groups, and the Atiyah Conjecture}

\author{Wolfgang L\"uck}
        \address{Mathematisches Institut der Universit\"at Bonn\\
                Endenicher Allee 60\\
                53115 Bonn, Germany}
         \email{wolfgang.lueck@him.uni-bonn.de}
          \urladdr{http://www.him.uni-bonn.de/lueck}

\author{Peter Linnell}
\address{Department of Mathematics\\
Virginia Tech\\
Blacksburg, VA 24061-0123,
\\USA}
\email{plinnell@math.vt.edu}
\urladdr{http://www.math.vt.edu/people/plinnell/}

         \date{February 2016}

\keywords{Localization, algebraic $K$-theory, Atiyah Conjecture}
    \subjclass[2010]{19B99, 16S85,22D25}


\usepackage{hyperref}
\usepackage{color}
\usepackage{pdfsync}
\usepackage{calc}
\usepackage{enumerate,amssymb}
\usepackage[arrow,curve,matrix,tips,2cell]{xy}
  \SelectTips{eu}{10} \UseTips
  \UseAllTwocells

\DeclareMathAlphabet\EuR{U}{eur}{m}{n}
\SetMathAlphabet\EuR{bold}{U}{eur}{b}{n}

\makeindex             



\theoremstyle{plain}
\newtheorem{theorem}{Theorem}[section]
\newtheorem{lemma}[theorem]{Lemma}

\theoremstyle{definition}

\newtheorem{definition}[theorem]{Definition}

\newtheorem{remark}[theorem]{Remark}

{\catcode`@=11\global\let\c@equation=\c@theorem}


\newcommand{\comsquare}[8]                   
{\begin{CD}
#1 @>#2>> #3\\
@V{#4}VV @V{#5}VV\\
#6 @>#7>> #8
\end{CD}
}

\newcommand{\xycomsquare}[8]                   
{\xymatrix
{#1 \ar[r]^{#2} \ar[d]^{#4} &
#3 \ar[d]^{#5}  \\
#6\ar[r]^{#7} &
#8
}
}

\newcommand{\xycomsquareminus}[8]                      
{\xymatrix{#1 \ar[r]^-{#2} \ar[d]^-{#4} &
#3 \ar[d]^-{#5}  \\
#6\ar[r]^-{#7} &
#8
}
}




\newcommand{\cala}{\mathcal{A}}
\newcommand{\calb}{\mathcal{B}}
\newcommand{\calc}{\mathcal{C}}
\newcommand{\cald}{\mathcal{D}}

\newcommand{\caln}{{\mathcal N}}
\newcommand{\calr}{\mathcal{R}}

\newcommand{\calt}{\mathcal{T}}
\newcommand{\calu}{\mathcal{U}}
\newcommand{\calx}{\mathcal{X}}
\newcommand{\caly}{\mathcal{Y}}
\newcommand{\calz}{\mathcal{Z}}


\newcommand{\IC}{{\mathbb C}}

\newcommand{\IQ}{{\mathbb Q}}

\newcommand{\IZ}{{\mathbb Z}}




\newcommand{\coker}{\operatorname{coker}}

\newcommand{\id}{\operatorname{id}}

\newcommand{\Wh}{\operatorname{Wh}}


\DeclareMathOperator{\Loc}{L}




\newcommand{\higherlim}[3]{{\setbox1=\hbox{\rm lim}
        \setbox2=\hbox to \wd1{\leftarrowfill} \ht2=0pt \dp2=-1pt
        \mathop{\vtop{\baselineskip=5pt\box1\box2}}
        _{#1}}^{#2}#3}

\newcommand{\version}[1]                       
{\begin{center} last edited on #1\\
last compiled on \today\\
name of texfile: \jobname
\end{center}
}

\newcounter{commentcounter}


\begin{document}

\typeout{----------------------------  localization.tex  ----------------------------}


\typeout{------------------------------------ Abstract ----------------------------------------}

\begin{abstract}
  Let $K_1^w(\IZ G)$ be the $K_1$-group of square matrices over $\IZ G$ which are not
  necessarily invertible but induce weak isomorphisms after passing to Hilbert space
  completions.  Let $\cald(G;\IQ)$ be the division closure of $\IQ G$ in the algebra
  $\calu(G)$ of operators affiliated to the group von Neumann algebra. Let $\calc$ be the
  smallest class of groups which contains all free groups and is closed under directed
  unions and extensions with elementary amenable quotients.  Let $G$ be a torsionfree
  group which belongs to $\calc$. Then we prove that $K_1^w(\IZ (G)$ is isomorphic to
  $K_1(\cald(G;\IQ))$. Furthermore we show that $\cald(G;\IQ)$ is a skew field and hence
  $K_1(\cald(G;\IQ))$ is the abelianization of the multiplicative group of units in
  $\cald(G;\IQ)$.
\end{abstract}

\maketitle


 \typeout{-------------------------------   Section 0: Introduction --------------------------------}

\setcounter{section}{-1}
\section{Introduction}

In~\cite{Friedl-Lueck(2016l2_universal)} we have introduced the \emph{universal $L^2$-torsion}
$\rho^{(2)}_u(X;\caln(G))$ of an $L^2$-acyclic finite $G$-$CW$-complex $X$ and discussed its
applications.  It takes values in a certain abelian group $\Wh^w(G)$  which is the quotient of 
the $K_1$-group $K_1^w(\IZ G)$ by the subgroup
given by trivial units $\{\pm g \mid g \in G\}$. Elements $[A] \in K_1^w(\IZ G)$ are given 
by $(n,n)$-matrices $A$ over $\IZ G$ which are not necessarily
invertible but for which the operator $r_A^{(2)} \colon L^2(G)^n \to L^2(G)^n$ given by
right multiplication with $A$ is a weak isomorphism, i.e., it is injective and has dense
image.  We require for such  square matrices $A,B$ the  following relations in $K_1^w(\IZ G)$
\begin{eqnarray*}
[AB]  & = & [A] \cdot [B];
\\
\left[\begin{pmatrix}A & \ast \\ 0 & B \end{pmatrix}\right]
& = & [A] \cdot [B];
\end{eqnarray*}
More details about $\Wh^w(G)$ and $K_1^w(\IZ G)$ will be given in 
Section~\ref{sec:Proof_of_the_main_Theorem}.

Let $\cald(G;\IQ) \subseteq \calu(G)$  be the smallest subring of the
algebra $\calu(G)$ of operators $L^2(G) \to L^2(G)$ affiliated to the
group von Neumann algebra $\caln(G)$ which contains $\IQ G$ and is
division closed, i.e., any element in $\cald(G;\IQ)$ which is
invertible in $\calu(G)$ is already invertible in
$\cald(G;\IQ)$. (These notions will be explained in detail in
Subsection~\ref{subsec:Review_of_division_closure}.)
 
The main result of this paper is 

\begin{theorem}[$K_1^w(G)$ and units in $\cald(G;\IQ)$]
  \label{the:Whw(G)_and_units_in_cald(G;IQ)}
  Let $\calc$ be the smallest class of groups which contains all free groups and is closed
  under directed unions and extensions with elementary amenable quotients.  Let $G$ be a
  torsionfree group which belongs to $\calc$.

  Then $\cald(G;\IQ)$ is a skew field and there are isomorphisms
  \[
  K_1^w(\IZ G) \xrightarrow{\cong} K_1(\cald(G;\IQ)) \xrightarrow{\cong}
  \cald(G;\IQ)^{\times}/[\cald(G;\IQ)^{\times},\cald(G;\IQ)^{\times}].
  \]
\end{theorem}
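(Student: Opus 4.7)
The theorem contains three claims which I address in turn. The third equality $K_1(\cald(G;\IQ)) \cong \cald(G;\IQ)^{\times}/[\cald(G;\IQ)^{\times},\cald(G;\IQ)^{\times}]$ is the classical Dieudonné determinant theorem for skew fields, and reduces to the first claim. That $\cald(G;\IQ)$ is a skew field is essentially Linnell's theorem on the strong Atiyah conjecture for $\calc$: one runs an induction along the construction of $\calc$, using the Magnus--Lewin embedding for free groups, the compatibility of division closure with directed unions, and a crossed-product argument on $\calu(G)$ for elementary amenable extensions. I take these two ingredients as input and concentrate on the isomorphism $K_1^w(\IZ G) \cong K_1(\cald(G;\IQ))$.

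The forward map is constructed by observing that if $A \in \Mat_n(\IZ G)$ induces a weak isomorphism on $L^2(G)^n$, then $r_A$ is invertible in $\calu(G)$, and division closedness of $\cald(G;\IQ)$ in $\calu(G)$ forces $A^{-1} \in \Mat_n(\cald(G;\IQ))$. The two defining relations of $K_1^w(\IZ G)$ (multiplicativity and triangularity) hold in $K_1$ of any ring, so $[A] \mapsto [A]$ is well defined. For surjectivity, I use Cohn's characterization of the division closure inside a skew field: every $u \in \cald(G;\IQ)$ appears as an entry of $A^{-1}$ for some matrix $A$ over $\IQ G$ (and, after clearing denominators, over $\IZ G$) that is invertible in $\cald(G;\IQ)$, i.e., a weak isomorphism. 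Packaging $u$ as a diagonal block of a suitable matrix over $\IZ G$ and comparing with the Dieudonné presentation, every generator of $K_1(\cald(G;\IQ))$ lifts to $K_1^w(\IZ G)$.

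The hard part is injectivity. My plan is to identify $\cald(G;\IQ)$ with the universal (Cohn) localization $\IZ G \to \IZ G_{\Sigma}$ at the set $\Sigma$ of weak-iso matrices, and then to invoke the Schofield--Neeman--Ranicki long exact sequence
\[
K_1(\IZ G) \longrightarrow K_1(\IZ G_{\Sigma}) \longrightarrow K_0(\IZ G;\Sigma) \longrightarrow K_0(\IZ G),
\]
matching each term with the corresponding piece of the tautological presentation of $K_1^w(\IZ G)$ by weak-iso matrices modulo its two relations. The main obstacle is to prove that the universal localization $\IZ G_{\Sigma}$ actually coincides with $\cald(G;\IQ)$: the universal property provides only a ring map $\IZ G_{\Sigma} \to \cald(G;\IQ)$, and one must show it is bijective. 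Surjectivity is given by the Cohn characterization above, while injectivity requires $\IZ G_{\Sigma}$ to be a skew field, which I would establish by re-running the three-step induction along $\calc$ used for $\cald(G;\IQ)$, verifying at each stage (free groups, directed unions, elementary amenable extensions) that universal localization, division closure, and the skew-field property are simultaneously preserved. Once this is in place, the exact sequence collapses to the desired isomorphism.
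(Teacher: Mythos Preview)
Your overall architecture matches the paper's: reduce to two ingredients, namely that $\cald(G;\IQ)$ is a skew field (Linnell) and that the universal localization of the group ring at weak-isomorphisms coincides with $\cald(G;\IQ)$, and then deduce the $K_1$-statement. Your proposed transfinite induction along $\calc$ to identify the Cohn localization with $\cald(G;\IQ)$ is exactly what the paper carries out (this is packaged as property (ULA); the paper treats $\IQ G$, but passing between $\IZ G_{\Sigma}$ and $\IQ G_{\Sigma}$ is immediate since nonzero integers are weak isomorphisms).

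Where you diverge is in the $K$-theory step, and here there is a genuine gap. The paper does \emph{not} use the Schofield--Neeman--Ranicki sequence; it proves directly that for any ring $R$ with $R\hookrightarrow R_\Sigma$ the map $K_1(R,\Sigma)\to K_1(R_\Sigma)$ is an isomorphism, by constructing an explicit inverse. The inverse sends $[a]\in K_1(R_\Sigma)$ to $[b']-[b]$, where $b,b'\colon P\oplus Q\to P\oplus Q$ are $R$-maps in $\Sigma$ with $(\id\otimes b)\cdot\begin{pmatrix}1&a'\\0&a\end{pmatrix}=\id\otimes b'$, supplied by Cramer's rule; well-definedness is checked via Malcolmson's normal form for equality in $R_\Sigma$. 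Your proposal instead invokes the exact sequence $K_1(R)\to K_1(R_\Sigma)\to K_0(R;\Sigma)\to K_0(R)$ and asserts that it ``collapses'' to $K_1^w(\IZ G)\cong K_1(R_\Sigma)$. But $K_1^w(\IZ G)$ does not appear in that sequence, and you have not said what term it is supposed to equal or how a five-lemma comparison would be set up. The natural maps only give a factorization $K_1(R)\to K_1^w(\IZ G)\to K_1(R_\Sigma)$, and exactness of the localization sequence gives no control over the middle group. Unless you can identify $K_1^w(\IZ G)$ with a specific relative term (and prove that identification), the sequence alone does not yield injectivity.

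Your surjectivity sketch is also weaker than what is needed. Knowing that every $u\in\cald(G;\IQ)$ occurs as an \emph{entry} of some $A^{-1}$ with $A$ over $\IZ G$ does not by itself express $[u]\in K_1(\cald(G;\IQ))$ as a difference of classes of weak-iso matrices; you need the full Cramer's rule statement (any automorphism over $R_\Sigma$ is, after stabilizing, the quotient of two $R$-maps in $\Sigma$), which is precisely the input to the paper's inverse map above. Once you accept Cramer's rule at that strength, the direct construction of the inverse is both shorter and more transparent than routing through the localization sequence.
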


In the special case that $G = \IZ$, the right side reduces to the
multiplicative abelian group of non-trivial elements in the field $\IQ(z,z^{-1})$ of
rational functions with rational coefficients in one variable.
This reflects the fact that in the case $G = \IZ$ the universal 
$L^2$-torsion is closely related to Alexander polynomials.


\subsection*{Acknowledgments.}

The first author was partially supported by a grant from the {NSA}.
The paper is financially supported by the Leibniz-Preis of
the second author granted by the {DFG} and the ERC Advanced Grant ``KL2MG-interactions''
(no.  662400) of the second author granted by the European Research Council.

\tableofcontents


 \typeout{----------------------------   Section 1: Universal localization  ---------------------------}

\section{Universal localization}
\label{sec:Universal_localization}


\subsection{Review of universal localization}
\label{subsec:Review_of_universal_localization}

Let $R$ be a (associative) ring (with unit) and let $\Sigma$ be a set of homomorphisms 
between finitely generated projective (left) $R$-modules.  A
ring homomorphism $f\colon R \to S$ is called \emph{$\Sigma$-inverting} if for every element
$\alpha\colon M \to N$ of $\Sigma$ the induced map $S \otimes_R \alpha\colon S\otimes_R
M\to S\otimes_R N$ is an isomorphism.  A $\Sigma$-inverting ring homomorphism $i\colon R
\to R_{\Sigma}$ is called \emph{universal $\Sigma$-inverting} if for any
$\Sigma$-inverting ring homomorphism $f\colon R \to S$ there is precisely one ring
homomorphism $f_{\Sigma}\colon R_{\Sigma} \to S$ satisfying $f_{\Sigma} \circ i = f$.  If
$f\colon R \to R_{\Sigma}$ and $f'\colon R \to R_{\Sigma}'$ are two universal
$\Sigma$-inverting homomorphisms, then by the universal property there is precisely one
isomorphism $g\colon R_{\Sigma} \to R_{\Sigma}'$ with $g \circ f = f'$.  This shows the
uniqueness of the universal $\Sigma$-inverting homomorphism. The universal
$\Sigma$-inverting ring homomorphism exists, see~\cite[Section 4]{Schofield(1985)}.  If $\Sigma$ is a
set of matrices, a model for $R_{\Sigma}$ is given by considering the free $R$-ring
generated by the set of symbols $\{\overline{a_{i,j}}\mid A = (a_{i,j}) \in \Sigma\}$ and
dividing out the relations given in matrix form by $\overline{A}A = A\overline{A} = 1$,
where $\overline{A}$ stands for $(\overline{a_{i,j}})$ for $A= (a_{i,j})$.  The map
$i\colon R \to R_{\Sigma}$ does not need to be injective and the functor 
$R_{\Sigma} \otimes_R -$ does not need to be exact in general.

A special case of a universal localization is the Ore localization $S^{-1}R$ of a ring $R$
for a multiplicative closed subset $S \subseteq R$ which satisfies the Ore condition, namely
take $\Sigma$ to be the set of $R$-homomorphisms $r_s \colon R \to R, \; r \mapsto rs$,
where $s$ runs through $S$.  For the Ore localization the functor $S^{-1}R \otimes_R -$
is exact and the kernel of the canonical map $R \to S^{-1}R$ is 
$\{r \in R \mid \exists s \in S \, \text{with}\;  rs = 0\}$.

Let $R$ be a ring and let $\Sigma$ be a set of homomorphisms between finitely generated projective  $R$-modules.  We
call $\Sigma$ \emph{saturated} if for any two elements $f_0 \colon P_0 \to Q_0$ and $f_1
\colon P_1 \to Q_1$ of $\Sigma$ and any $R$-homomorphism $g_0 \colon P_0 \to Q_1$ and $g_1
\colon P_1 \to Q_0$ the $R$-homomorphisms $\begin{pmatrix} f_0 & 0 \\g_0 &
  f_1 \end{pmatrix}\colon P_0 \oplus P_1 \to Q_0 \oplus Q_1$ and $\begin{pmatrix} f_0 &
  g_1 \\ 0 & f_1 \end{pmatrix}\colon P_0 \oplus P_1 \to Q_0 \oplus Q_1$ belong to $\Sigma$
and for every $R$-homomorphism $f_0 \colon P_0 \to Q_0$ which becomes invertible over
$R_{\Sigma}$, there is an element $f_1 \colon P_1 \to Q_1$ in $\Sigma$, finitely generated
projective $R$-modules $X$ and $Y$, and $R$-isomorphisms $u \colon P_0 \oplus X
\xrightarrow{\cong} P_1 \oplus Y$ and $v \colon Q_0 \oplus X \xrightarrow{\cong} Q_1 \oplus Y$ 
satisfying $(f_1 \oplus \id_Y) \circ u = v \circ (f_0 \oplus \id_X)$. We can
always find for $\Sigma$ another set $\Sigma'$ with $\Sigma \subseteq \Sigma'$ such that
$\Sigma'$ is saturated and the canonical map $R_{\Sigma} \to R_{\Sigma'}$ is bijective.
Moreover, in nearly all cases we will consider sets $\Sigma$ which
are already saturated.  Indeed if $\Sigma'$ denotes the set of all maps
between finitely generated projective (left) modules which become
invertible over $R_{\Sigma}$, then $\Sigma \subseteq \Sigma'$, $\Sigma'$ is saturated, and the canonical map
$R_{\Sigma} \to R_{\Sigma'}$ is an isomorphism
cf.~\cite[Exercise~7.2.8 on page~394]{Cohn(1985)}.
Therefore we can assume without harm in the sequel that $\Sigma$ is saturated.


\subsection{$K_1$ of universal localizations}
\label{subsec:K_1_of_universal_localization}

Let $R$ be a ring and let $\Sigma$ be a (saturated) set of homomorphisms between finitely
generated projective $R$-modules.
\begin{definition}[$K_1(R,\Sigma)$]
\label{def:K_1(R,Sigma)}
Let $K_1(R,\Sigma)$ be the abelian group defined in terms of generators and relations as
follows.  Generators $[f]$ are (conjugacy classes) of
$R$-endomorphisms $f \colon P \to P$
of finitely generated projective $R$-modules $P$ such that $\id_{R_{\Sigma}}\otimes_R f
\colon R_{\Sigma} \otimes_R P \to R_{\Sigma} \otimes_R P$ is an isomorphism.  If $f,g
\colon P \to P$ are $R$-endomorphisms of the same finitely generated projective $R$-module
$P$ such that $\id_{R_{\Sigma}}\otimes_R f$ and $\id_{R_{\Sigma}}\otimes_R g$ are
bijective, then we require the relation
\[
[g \circ f] = [g] + [f].
\]
If we have a commutative diagram of finitely generated projective $R$-modules
with exact rows
\[
\xymatrix{0 \ar[r] 
&
P_0 \ar[r]^{i} \ar[d]_{f_0}
& 
P_1 \ar[r]^{p} \ar[d]_{f_1}
& 
P_2 \ar[r] \ar[d]_{f_2}
&
0
\\
0 \ar[r] 
&
P_0 \ar[r]^{i} 
& 
P_1 \ar[r]^{p}
& 
P_2 \ar[r]
&
0
}
\]
such that $\id_{R_{\Sigma}}\otimes_R f_0$, $\id_{R_{\Sigma}}\otimes_R
f_2$ (and hence $\id_{R_{\Sigma}}\otimes_R f_1$)  
are bijective, then we require the relation
\[
[f_1] = [f_0] + [f_2].
\]
\end{definition}
If the set $\Sigma$ consists of all isomorphisms $R^n \xrightarrow{\cong} R^n$ for all $n
\ge 0$, then for an $R$-endomorphism $f \colon P \to P$ of a finitely generated projective
$R$-module $P$ the induced map $\id_{R_{\Sigma}} \otimes f$ is bijective if and only if
$f$ itself is already bijective and hence $K_1(R,\Sigma)$ is just the classical first
$K$-group $K_1(R)$.

The main result of this section is

\begin{theorem}[$K_1(R,\Sigma)$ and $K_1(R_{\Sigma})$]
\label{the:K_1(R,Sigma)_and_K_1(R_Sigma)}
Suppose that every element in $\Sigma$ is given by an endomorphism of a finitely generated
projective $R$-module and that the canonical map $i \colon R \to R_{\Sigma}$ is
injective. Then the homomorphism
\[
\alpha \colon K_1(R,\Sigma) \xrightarrow{\cong} K_1(R_{\Sigma}), \quad [f \colon P \to P] 
\mapsto [\id_{R_{\Sigma}} \otimes_R f \colon R_{\Sigma} \otimes_R P \to R_{\Sigma} \otimes_R P]
\]
is bijective. 
\end{theorem}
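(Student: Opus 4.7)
The plan is to verify that $\alpha$ is well-defined and then to construct an explicit inverse $\beta\colon K_1(R_\Sigma) \to K_1(R,\Sigma)$.

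For well-definedness of $\alpha$, I would check that both defining relations of $K_1(R,\Sigma)$ hold in $K_1(R_\Sigma)$. The composition relation $[g\circ f] = [g]+[f]$ transports directly to multiplicativity in $K_1(R_\Sigma)$. The additivity on short exact sequences survives because the sequence remains short exact after $R_\Sigma\otimes_R -$ (each $P_i$ being projective); after choosing a splitting, $\id_{R_\Sigma}\otimes f_1$ becomes upper triangular with $\id_{R_\Sigma}\otimes f_0$ and $\id_{R_\Sigma}\otimes f_2$ on the diagonal, whose $K_1$-class is the sum of the diagonal classes.

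The heart of the proof is the construction of $\beta$. The input is the Cohn--Schofield--Malcolmson normal form for elements of a universal localization: every matrix over $R_\Sigma$ can be written as $-C\cdot(\id_{R_\Sigma}\otimes A)^{-1}\cdot B$ with $A,B,C$ matrices over $R$ and $A$ (after stabilization) in $\Sigma$. Given $\phi\in\GL_n(R_\Sigma)$ and such a presentation, consider the block matrix $M = \begin{pmatrix} A & B \\ C & 0 \end{pmatrix}$ over $R$. A Schur-complement calculation over $R_\Sigma$ shows $\id_{R_\Sigma}\otimes M$ is invertible and that $[\id_{R_\Sigma}\otimes M] = [\id_{R_\Sigma}\otimes A] + [\phi]$ in $K_1(R_\Sigma)$, so both $[M]$ and $[A]$ represent classes in $K_1(R,\Sigma)$; I would set $\beta([\phi]) := [M] - [A]$. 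Then $\alpha\circ\beta = \id$ holds by construction. The reverse composite $\beta\circ\alpha = \id$ is verified on a generator $[f\colon P\to P]$ by applying the construction to the trivial presentation $A = \id_P$, $B = \id_P$, $C = -f$: the resulting $M = \begin{pmatrix} \id & \id \\ -f & 0 \end{pmatrix}$ can be brought to upper triangular form with diagonal $(\id, f)$ by an elementary row operation defined over $R$, and additivity on the tautological short exact sequence $0 \to P \to P\oplus P \to P \to 0$ then yields $[M] = [f]$ inside $K_1(R,\Sigma)$.

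The main obstacle is showing that $\beta$ is well-defined, that is, independent of the chosen Malcolmson presentation of $\phi$. Two presentations of the same automorphism differ by a matrix identity over $R_\Sigma$, and one must translate this identity into manipulations expressible by the defining relations of $K_1(R,\Sigma)$, which live at the level of $R$. This is precisely where the hypothesis that $i\colon R\to R_\Sigma$ is injective becomes essential: matrix equalities between $R$-matrices which hold after applying $i$ already hold over $R$, so the comparison between two presentations can be executed entirely on the $R$-level via composition and Schur-complement (short exact sequence) moves, without introducing spurious relations from the ambient ring $R_\Sigma$.
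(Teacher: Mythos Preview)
Your approach is essentially the paper's: it too constructs an explicit inverse $\beta$ via Schofield's Cramer's rule, obtaining $R$-endomorphisms $b,b'$ of $P\oplus Q$ with $(\id_{R_\Sigma}\otimes b)\circ\begin{pmatrix}\id & a'\\ 0 & a\end{pmatrix} = \id_{R_\Sigma}\otimes b'$ and setting $\beta([a]) = [b']-[b]$, which is your $[M]-[A]$ in a slightly different packaging. The independence-of-presentation step you correctly flag as the main obstacle is indeed where almost all the work sits; the paper executes it by invoking Malcolmson's criterion to produce an explicit $7\times 7$ block-matrix identity over $R$ and then reducing via column permutations and the short-exact-sequence relation (this is also exactly where injectivity of $i$ is used, as you say).

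One point you omit: after establishing independence of the presentation you still only have a set map $\beta\colon \GL(R_\Sigma)\to K_1(R,\Sigma)$, and your identities $\alpha\circ\beta=\id$ and $\beta\circ\alpha=\id$ are not yet statements about maps on $K_1$. You must separately verify multiplicativity $\beta(\bar a\, a)=\beta(\bar a)+\beta(a)$ so that $\beta$ descends to $K_1(R_\Sigma)$; the paper does this by another block-matrix calculation of the same flavour (combining the two Cramer presentations into a single $4\times 4$ block endomorphism and reading off the class).
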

\begin{proof}
We construct an inverse
\begin{eqnarray}
& \beta \colon K_1(R_{\Sigma}) \to K_1(R,\Sigma) &
\label{beta_K_1(R_Sigma)_to_K_1(R,Sigma)}
\end{eqnarray}
as follows. Consider an element $x$ in $K_1(R_{\Sigma})$.  Then we can choose a finitely
generated projective $R$-module $Q$, (actually, we could choose it to be finitely
generated free), and an $R_{\Sigma}$-automorphism
\[
a \colon R_{\Sigma} \otimes_R Q \xrightarrow{\cong}  R_{\Sigma} \otimes_R Q
\]
such that $x = [a]$. Now the key ingredient is Cramer's rule, see~\cite[Theorem~4.3 on
page 53]{Schofield(1985)}.  It implies the existence of a finitely generated projective
$R$-module $P$, $R$-homomorphisms $b,b' \colon P \oplus Q \to P \oplus Q$ and a
$R_{\Sigma}$-homomorphism $a' \colon R_{\Sigma} \otimes_R Q \to R_{\Sigma} \otimes_R P$
such that $\id_{R_{\Sigma}} \otimes_R b$ is bijective, and for the
$R_{\Sigma}$-homomorphism $\begin{pmatrix} \id_{R_{\Sigma} \otimes_R P} & a' \\ 0 &
  a \end{pmatrix} \colon R_{\Sigma} \otimes_R P \oplus R_{\Sigma} \otimes_R Q \to
R_{\Sigma} \otimes_R P \oplus R_{\Sigma} \otimes_R Q$ the composite
\begin{multline*}
R_{\Sigma} \oplus (P \oplus Q) \xrightarrow{i} R_{\Sigma} \otimes_R  P \oplus R_{\Sigma} \otimes_R Q
\xrightarrow{\begin{pmatrix} \id_{R_{\Sigma} \otimes_R P}  & a' \\ 0 & a \end{pmatrix}} 
R_{\Sigma} \otimes_R  P \oplus R_{\Sigma} \otimes_R Q
\\
\xrightarrow{i^{-1}} R_{\Sigma} \oplus (P \oplus Q) 
\xrightarrow{\id_{R_{\Sigma}} \otimes_R b} R_{\Sigma} \oplus (P \oplus Q) 
\end{multline*}
agrees with $\id_{R_{\Sigma}}  \otimes_R b'$, where $i$ is the canonical $R_{\Sigma}$-isomorphism. 
Then also $\id_{R_{\Sigma}} \otimes_R b$ is bijective. We want to define
\begin{eqnarray}
\beta(x) & := & 
[b'] - [b].
\label{def_of_beta}
\end{eqnarray}
The main problem is to show that this is independent of the various choices. 
Given a  finitely generated projective $R$-module $P$ and an 
$R_{\Sigma}$-automorphism 
\[
a \colon R_{\Sigma} \otimes_R Q \xrightarrow{\cong}  R_{\Sigma} \otimes_R Q
\]
and two such choices $(P,b,b',a')$ and $(\overline{P},\overline{b},\overline{b}',\overline{a}')$, 
we show next
\begin{eqnarray}
[b] - [b] & := & 
[\overline{b}] - [\overline{b}'].
\label{def_of_beta_independence_of_(P;b,b',a')}
\end{eqnarray}
We can write
\begin{eqnarray*}
b 
& = & 
\begin{pmatrix} b_{P,P} 
&
b_{Q,P} \\ b_{P,Q} 
& 
b_{Q,Q} \end{pmatrix};
\\
b' 
& = & 
\begin{pmatrix} b'_{P,P} &b_{Q,P} \\ b'_{P,Q} & b'_{Q,Q} \end{pmatrix};
\\
\overline{b} 
& = & 
\begin{pmatrix} \overline{b}_{\overline{P},\overline{P}} 
&
\overline{b}_{Q,\overline{P}} \\ \overline{b}_{\overline{P},Q} 
& 
\overline{b}_{Q,Q} \end{pmatrix};
\\
\overline{b}' 
& = & 
\begin{pmatrix} \overline{b}'_{\overline{P},\overline{P}} 
&
\overline{b}_{Q,\overline{P}} \\ \overline{b}'_{\overline{P},Q} 
& 
\overline{b}'_{Q,Q} \end{pmatrix},
\end{eqnarray*}
for $R$-homomorphisms $b_{P,P} \colon P \to P$, $b_{P,Q} \colon P \to Q$, 
$b_{Q,P} \colon Q \to P$, $b_{Q,Q} \colon Q \to Q$, and analogously for $b'$, $\overline{b}$,
$\overline{b}'$. Then the relation between $b$ and $b'$ and $\overline{b}$ and
$\overline{b}'$ becomes
\[
\begin{pmatrix}
\id_{R_{\Sigma}} \otimes_R b_{P,P} 
& 
\id_{R_{\Sigma}} \otimes_R b_{Q,P}
\\
\id_{R_{\Sigma}} \otimes_R b_{P,Q}
&
\id_{R_{\Sigma}} \otimes_R b_{Q,Q}
\end{pmatrix}
\circ 
\begin{pmatrix}
\id_{R_{\Sigma} \otimes_R P}
& a'
\\
0
& 
a
\end{pmatrix}
 = 
\begin{pmatrix}
\id_{R_{\Sigma}} \otimes_R b'_{P,P} 
& 
\id_{R_{\Sigma}} \otimes_R b'_{Q,P}
\\
\id_{R_{\Sigma}} \otimes_R b'_{P,Q}
&
\id_{R_{\Sigma}} \otimes_R b'_{Q,Q}
\end{pmatrix}
\]
and analogously for $\overline{b}$ and $\overline{b}'$.  This implies 
$\id_{R_{\Sigma}}\otimes_R b_{P,P} = \id_{R_{\Sigma}} \otimes_R b'_{P,P}$ and hence 
$b_{P,P} = b'_{P,P}$ because of the injectivity of $i \colon R \to R_{\Sigma}$. Analogously 
we get  $b_{P,Q} = b'_{P,Q}$, $\overline{b}_{\overline{P},\overline{P}} =
\overline{b}'_{\overline{P},\overline{P}}$, and $\overline{b}_{\overline{P},Q} =
\overline{b}'_{\overline{P},Q}$.

The argument in~\cite[page~64-65]{Schofield(1985)} based on Macolmson's
criterion~\cite[Theorem~4.2 on page~53]{Schofield(1985)} implies that there exists
finitely generated projective $R$-modules $X_0$ and $X_1$, $R$-homomorphisms
\begin{eqnarray*}
d_1 \colon X_1 & \to & X_1,
\\
d_2 \colon  X_2 & \to & X_2,
\\
e_1 \colon X_1 & \to & Q,
\\
e_2 \colon X_2 & \to & P;
\\
\mu \colon P \oplus Q \oplus \overline{P} \oplus Q \oplus X_1 \oplus X_2 \oplus Q
& \to &
P \oplus Q \oplus \overline{P} \oplus Q \oplus  X_1 \oplus X_2 \oplus  Q;
\\
\nu \colon P \oplus Q \oplus \overline{P} \oplus Q \oplus X_1 \oplus X_2
&\to &
P \oplus Q \oplus \overline{P} \oplus Q \oplus  X_1 \oplus X_2;
\\
\tau \colon \colon P \oplus Q \oplus \overline{P} \oplus Q \oplus X_1 \oplus X_2 
& \to & 
Q,
\end{eqnarray*}
such that $\id_{R_{\Sigma}} \otimes_R d_1$, $\id_{R_{\Sigma}} \otimes_R d_2$, 
$\id_{R_{\Sigma}} \otimes_R \mu$ and $\id_{R_{\Sigma}} \otimes_R \nu$
are $R_{\Sigma}$-isomorphisms and for the four  $R$-homomorphisms 
\[P \oplus Q \oplus \overline{P} \oplus Q \oplus X_1 \oplus X_2 \oplus Q
\to
P \oplus Q \oplus \overline{P} \oplus Q \oplus  X_1 \oplus X_2 \oplus  Q
\]
given by 
\[
\alpha = \begin{pmatrix}
b_{P,P} & b_{Q,P} & 0 & 0 & 0 & 0 & 0
\\
b_{P,Q} & b_{Q,Q} & 0 & 0 & 0 & 0 & 0
\\
0 & \overline{b}_{Q,\overline{P}} & \overline{b}_{\overline{P},\overline{P}}'  & \overline{b}_{Q,\overline{P}}  & 0 & 0 & \overline{b}'_{Q,\overline{P}} 
\\
0 & \overline{b}_{Q,Q}  & \overline{b}_{\overline{P},Q}' & \overline{b}_{Q,Q}  & 0 & 0 &  \overline{b}'_{Q,Q}
\\
0 & 0 & 0 & 0 & d_1 & 0 & 0
\\
0 & 0 & 0 & 0 & 0 & d_2 & 0 
\\
0 & 0 & 0 & \id_{Q} & e_1 & 0 & 0
\end{pmatrix}
\]
\medskip
\[
\alpha' = \begin{pmatrix}
b_{P,P}' & b_{Q,P} & 0 & 0 & 0 & 0 & - b'_{Q,P}
\\
b_{P,Q}' & b_{Q,Q} & 0 & 0 & 0 & 0 & - b'_{Q,Q} 
\\
0 & \overline{b}_{Q,\overline{P}} & \overline{b}_{\overline{P},\overline{P}}  & \overline{b}_{Q,\overline{P}}  & 0 & 0 & 0
\\
0 & \overline{b}_{Q,Q}  & \overline{b}_{\overline{P},Q} & \overline{b}_{Q,Q}  & 0 & 0 &  0
\\
0 & 0 & 0 & 0 & d_1 & 0 & 0
\\
0 & 0 & 0 & 0 & 0 & d_2 & -e_2
\\
0 & 0 & 0 & \id_{Q} & e_1 & 0 & 0
\end{pmatrix}
\]
\medskip
\[
\gamma = \begin{pmatrix} \nu & 0 \\ 0 & \id_{Q} \end{pmatrix}
\]
and
\[
\gamma' = \begin{pmatrix} \nu & 0 \\ \tau
 & \id_{Q} \end{pmatrix}
\]
we get equations of maps of $R$-modules
\begin{eqnarray*}
\mu \circ \gamma & = & \alpha;
\\
\mu\circ \gamma' & = & \alpha'.
\end{eqnarray*}
Since $\id_{R_{\Sigma}} \otimes_R \mu$, $\id_{R_{\Sigma}} \otimes_R \gamma$ and
$\id_{R_{\Sigma}} \otimes_R\gamma'$ are isomorphisms, also $\id_{R_{\Sigma}} \otimes_R \alpha$ 
and $\id_{R_{\Sigma}} \otimes_R \alpha'$ are isomorphisms.  Hence we get
well-defined elements $[\mu]$, $[\nu]$, $[\nu']$, $[\alpha]$, and $[\alpha']$ in
$K_1(R,\Sigma)$ satisfying
\begin{eqnarray*}
{[\mu]} 
& = & 
{[\gamma]} + {[\alpha]};
\\
{[\mu]} 
& = &
{[\gamma']} + {[\alpha']};
\\
{[\gamma]}
& = & 
{[\gamma']}.
\end{eqnarray*}
This implies
\begin{eqnarray}
[\alpha] & = & [\alpha'].
\label{def_of_beta_independence_of_(P;b,b',a')_1}
\end{eqnarray}
If we interchange in the matrix defining $\alpha$ the fourth and the last column, we get a
matrix in a suitable block form which allows us to deduce
\begin{eqnarray}
[\alpha] 
& = & 
- \left[
\begin{pmatrix}
b_{P,P} & b_{Q,P} & 0 & 0 & 0 & 0 & 0
\\
b_{P,Q} & b_{Q,Q} & 0 & 0 & 0 & 0 & 0
\\
0 & \overline{b}_{Q,\overline{P}} & \overline{b}_{\overline{P},\overline{P}}'  & \overline{b}'_{Q,\overline{P}}  & 0 & 0 & \overline{b}_{Q,\overline{P}} 
\\
0 & \overline{b}_{Q,Q}  & \overline{b}_{\overline{P},Q}' & \overline{b}'_{Q,Q}  & 0 & 0 &  \overline{b}_{Q,Q}
\\
0 & 0 & 0 & 0 & d_1 & 0 & 0
\\
0 & 0 & 0 & 0 & 0 & d_2 & 0 
\\
0 & 0 & 0 & 0 & e_1 & 0 & \id_Q
\end{pmatrix}
\right]
\label{def_of_beta_independence_of_(P;b,b',a')_2}
\\
& = & 
- \left[\begin{pmatrix}
b_{P,P} & b_{Q,P} & 0 & 0 
\nonumber
\\
b_{P,Q} & b_{Q,Q} & 0 & 0 
\nonumber
\\
0 & \overline{b}_{Q,\overline{P}} & \overline{b}_{\overline{P},\overline{P}}'  & \overline{b}_{Q,\overline{P}}'  
\\
0 & \overline{b}_{Q,Q}  & \overline{b}_{\overline{P},Q}' &  \overline{b}'_{Q,Q}
\end{pmatrix}\right]
-
\left[\begin{pmatrix}d_1 & 0 & 0 \\ 0 & d_2 &  0 \\ e_1 & 0 & \id_Q
\end{pmatrix}\right]
\nonumber\\
& = & 
- \left[\begin{pmatrix} b_{P,P} & b_{Q,P} 
\\
b_{P,Q} & b_{Q,Q}
\end{pmatrix}
\right]
-
\left[\begin{pmatrix} \overline{b}'_{\overline{P},\overline{P}} & \overline{b}'_{Q,\overline{P}}
\nonumber
\\
\overline{b}'_{\overline{P},Q} & \overline{b}'_{Q,Q}
\end{pmatrix}\right]
- [d_1]- [d_2] - [\id_Q]
\nonumber
\\
& = & 
-[b] - [\overline{b}'] - [d_1] - [d_2].
\nonumber
\end{eqnarray}
Similarly we get  from the matrix describing $\alpha'$ after interchanging the
second and the last column, multiplying the second column with (-1),
interchanging the forth and the last column and finally subtracting appropriate multiples of the last
row from the third and row column to ensure that in the last column all entries except the
one in the right lower corner is trivial a matrix  in a suitable block form which allows us to deduce

\begin{eqnarray}
\quad \quad [\alpha' ]  & = & 
\left[\begin{pmatrix}
b_{P,P}' & b_{Q,P}' & 0 & 0 & 0 & 0 &  b_{Q,P}
\\
b_{P,Q}' & b_{Q,Q}' & 0 & 0 & 0 & 0 &  b_{Q,Q} 
\\
0 &  0& \overline{b}_{\overline{P},\overline{P}}  & \overline{b}_{Q,\overline{P}}  & 0 & 0 & \overline{b}_{Q,\overline{P}} 
\\
0 &  & \overline{b}_{\overline{P},Q} & \overline{b}_{Q,Q}  & 0 & 0 &  \overline{b}_{Q,Q} 
\\
0 & 0 & 0 & 0 & d_1 & 0 & 0
\\
0 & e_2 & 0 & 0 & 0 & d_2 & -e_2
\\
0 & 0 & 0 & \id_{Q} & e_1 & 0 & 0
\end{pmatrix}
\right]
\label{def_of_beta_independence_of_(P;b,b',a')_3}
\\
& = & 
\left[\begin{pmatrix}
b_{P,P}' & b_{Q,P}' & 0 & b_{Q,P} & 0 & 0 &  0
\\
b_{P,Q}' & b_{Q,Q}' & 0 & b_{Q,Q}  & 0 & 0 &  0
\\
0 &  0& \overline{b}_{\overline{P},\overline{P}}  & \overline{b}_{Q,\overline{P}}  & 0 & 0 & \overline{b}_{Q,\overline{P}} 
\\
0 &  & \overline{b}_{\overline{P},Q} & \overline{b}_{Q,Q}  & 0 & 0 &  \overline{b}_{Q,Q} 
\\
0 & 0 & 0 & 0 & d_1 & 0 & 0
\\
0 & e_2 & 0 & 0 & 0 & d_2 &0
\\
0 & 0 & 0 & 0 & e_1 & 0 & \id_{Q} 
\end{pmatrix}
\right]
\nonumber
\\
& = & 
- \left[\begin{pmatrix}
b_{P,P}' & b_{Q,P}' & 0 & b_{Q,P} & 0 & 0 &  0
\\
b_{P,Q}' & b_{Q,Q}' & 0 & b_{Q,Q}  & 0 & 0 &  0
\\
0 &  0& \overline{b}_{\overline{P},\overline{P}}  & \overline{b}_{Q,\overline{P}}  & - \overline{b}_{Q,\overline{P}}  \circ e_1 & 0 & 0
\\
0 &  & \overline{b}_{\overline{P},Q} & \overline{b}_{Q,Q}  &  - \overline{b}_{Q,Q} \circ e_1  & 0 &  0
\\
0 & 0 & 0 & 0 & d_1 & 0 & 0
\\
0 & e_2 & 0 & 0 & 0 & d_2 & 0
\\
0 & 0 & 0 & 0 & e_1 & 0 & \id_{Q} 
\end{pmatrix}
\right]
\nonumber
\\
& = & 
- \left[\begin{pmatrix}
b_{P,P}' & b_{Q,P}' & 0 & b_{Q,P} & 0 
\\
b_{P,Q}' & b_{Q,Q}' & 0 & b_{Q,Q}  & 0 
\\
0 &  0 & \overline{b}_{\overline{P},\overline{P}}  & \overline{b}_{Q,\overline{P}}  
& - \overline{b}_{Q,\overline{P}}  \circ e_1 
\\
0 &  & \overline{b}_{\overline{P},Q} & \overline{b}_{Q,Q}  &  - \overline{b}_{Q,Q} \circ e_1  
\\
0 & 0 & 0 & 0 & d_1 
\end{pmatrix}
\right]
- \left[\begin{pmatrix}
d_2 & - e_2 \\ 0 & \id_Q
\end{pmatrix}
\right]
\nonumber
\\
& = & 
- \left[\begin{pmatrix}
b_{P,P}' & b_{Q,P}' & 0 & b_{Q,P} 
\\
b_{P,Q}' & b_{Q,Q}' & 0 & b_{Q,Q}  
\\
0 &  0& \overline{b}_{\overline{P},\overline{P}}  & \overline{b}_{Q,\overline{P}}  
\\
0 &  & \overline{b}_{\overline{P},Q} & \overline{b}_{Q,Q}  
\end{pmatrix}
\right]
- [d_1] 
- [d_2] 
-[\id_Q]
\nonumber
\\
& = & 
- \left[\begin{pmatrix}
b_{P,P}' & b_{Q,P}' 
\\
b_{P,Q}' & b_{Q,Q}' 
\end{pmatrix}
\right]
-
 \left[\begin{pmatrix}
\overline{b}_{\overline{P},\overline{P}}  & \overline{b}_{Q,\overline{P}} 
\\
\overline{b}_{\overline{P},Q} & \overline{b}_{Q,Q} 
\end{pmatrix}
\right]
- [d_1] 
- [d_2] 
\nonumber
\\
& = & 
- [b'] -  [\overline{b}] - [d_1] - [d_2].
\nonumber
\end{eqnarray}
Now~\eqref{def_of_beta_independence_of_(P;b,b',a')} follows from 
equations~\eqref{def_of_beta_independence_of_(P;b,b',a')_1},~\eqref{def_of_beta_independence_of_(P;b,b',a')_2},
and~\eqref{def_of_beta_independence_of_(P;b,b',a')_3}.

We conclude from~\eqref{def_of_beta_independence_of_(P;b,b',a')_3} that we 
can assign to a finitely generated projective $R$-module $P$ and  an $R_{\Sigma}$-automorphism 
$a \colon R_{\Sigma} \otimes_R Q \xrightarrow{\cong}  R_{\Sigma} \otimes_R Q$
a well-defined element
\begin{eqnarray}
[a] \in K_1(R,\Sigma).
\label{[a]_in_K_1(R,sigma)}
\end{eqnarray}
If we have an isomorphism $u \colon Q \xrightarrow{\cong}  Q'$ of finitely generated 
projective $R$-modules, then one easily checks
\begin{eqnarray}
[(\id_{R_{\Sigma}} \otimes_R u) \circ a \circ (\id_{R_{\Sigma}} \otimes_R u)^{-1}] = [a].
\label{conjugation_invariance_of_[a|}
\end{eqnarray}
Given two finitely generated projective $R$-modules $Q$ and $\overline{Q}$ and
$R_{\Sigma}$-automorphisms 
$a \colon R_{\Sigma} \otimes_R Q \xrightarrow{\cong}  R_{\Sigma} \otimes_R Q$ and
$\overline{a}\colon R_{\Sigma} \otimes_R \overline{Q} 
\xrightarrow{\cong}  R_{\Sigma} \otimes_R \overline{Q}$,
one easily checks
\begin{eqnarray}
[a \oplus \overline{a}] = [a] + [\overline{a}].
\label{direct_sum_and_[a|}
\end{eqnarray}
Obviously we get for any finitely generated projective $R$-module $Q$
\begin{eqnarray}
[(\id_{R_{\Sigma}} \otimes_R \id_Q)] & = 0.
\label{[id]_gives_zero}
\end{eqnarray}
Consider a finitely generated projective $R$-module $Q$ and two $R_{\Sigma}$-isomorphisms
$a, \overline{a} \colon R_{\Sigma} \otimes_R Q \xrightarrow{\cong} R_{\Sigma} \otimes_R
Q$.  Next we want to show
\begin{eqnarray}
[\overline{a} \circ a] & = & [\overline{a} ] + [a].
\label{[a'a]_is_[a']_plus_[a]}
\end{eqnarray}
Make the choices $(P,b,b',a')$ and
$(\overline{P},\overline{b},\overline{b}',\overline{a}')$ for $a$ and $\overline{a}$ as we
did above in the definition of $[a]$ and $[\overline{a}]$.  Consider the
$R_{\Sigma}$-automorphism
\[
A =
\begin{pmatrix} \id_{R_{\sigma} \otimes_R P} & 0 & 0 & a'
\\
0 & \id_{R_{\sigma} \otimes_R Q}  & 0 & a
\\
0 & 0 & \id_{R_{\sigma} \otimes_R \overline{P}} & \overline{a}'a
\\
0 & 0 & 0 & \overline{a}a
\end{pmatrix}
\]
of $(R_{\Sigma} \otimes_R P) \oplus (R_{\Sigma} \otimes_R Q)  \oplus (R_{\Sigma} \otimes_R \overline{P})  
\oplus (R_{\Sigma} \otimes_R Q)$,
and the $R$-endomorphisms of $P \oplus Q \oplus \overline{P} \oplus Q$
\[
B = 
\begin{pmatrix}
b_{P,P} & b_{Q,P} & 0 & 0
\\
b_{P,Q} & b_{Q,Q} & 0 & 0
\\
0 &  - \overline{b}'_{Q,\overline{P}} & \overline{b}_{\overline{P},\overline{P}} & \overline{b}_{Q,\overline{P}}
\\
0 &  - \overline{b}'_{Q,Q} & \overline{b}_{\overline{P},Q} & \overline{b}_{Q,Q}
\end{pmatrix}
\]
and 
\[
B' = 
\begin{pmatrix}
b_{P,P}' & b_{Q,P} & 0 & b_{Q,P}' 
\\
b_{P,Q}' & b_{Q,Q} & 0 & b_{Q,Q}'
\\
0 &  - \overline{b}'_{Q,P} & \overline{b}_{P,P} & 0
\\
0 &  - \overline{b}'_{Q,Q} & \overline{b}_{P,Q} & 0
\end{pmatrix}
\]
From the block structure of $B$ one concludes that $(\id_{R_{\Sigma}} \otimes B)$ is 
an isomorphism and we get in $K_1(R,\Sigma)$
\begin{eqnarray}
[B]
& = & 
\left[\begin{pmatrix}
b_{P,P} & b_{Q,P} 
\\
b_{P,Q} & b_{Q,Q} 
\end{pmatrix}\right]
+
\left[\begin{pmatrix}
\overline{b}_{P,P} & \overline{b}_{Q,P}
\\
\overline{b}_{P,Q} & \overline{b}_{Q,Q}
\end{pmatrix}\right]
\label{[a'a]_is_[a']_plus_[a]_1}
\\
& = & 
[b] +  [\overline{b}].
\nonumber
\end{eqnarray}
If interchange in $B''$ the second and last column and multiply the last column with $-1$,
we conclude from the block structure of the resulting matrix that $(\id_{R_{\Sigma}}
\otimes B')$ is an isomorphism and we get in $K_1(R,\Sigma)$
\begin{eqnarray}
[B']
& = &
\left[
\begin{pmatrix}
b_{P,P}' & b_{Q,P}' & 0 & b_{Q,P} 
\\
b_{P,Q}' & b_{Q,Q}' & 0 & b_{Q,Q}
\\
0 &   0& \overline{b}_{\overline{P},\overline{P}} &  \overline{b}'_{Q,\overline{P}} 
\\
0 &  0& \overline{b}_{\overline{P},Q} &   \overline{b}'_{Q,Q} 
\end{pmatrix}
\right]
\label{[a'a]_is_[a']_plus_[a]_2}
\\
& = & 
\left[
\begin{pmatrix}
b_{P,P}' & b_{Q,P}' 
\\
b_{P,Q}' & b_{Q,Q}' 
\end{pmatrix}
\right]
+
\left[
\begin{pmatrix}
\overline{b}_{P,P} &  \overline{b}'_{Q,P} 
\nonumber
\\
\overline{b}_{P,Q} &   \overline{b}'_{Q,Q} 
\end{pmatrix}
\right]
\\
& = &
[b'] + [\overline{b'}].
\nonumber 
\end{eqnarray}
Since $(\id_{R_{\Sigma}} \otimes B)$ and $(\id_{R_{\Sigma}} \otimes B')$ are isomorphism
and we have $(\id_{R_{\Sigma}} \otimes B) \circ A = (\id_{R_{\Sigma}} \otimes B')$, we get
directly from the definitions
\begin{eqnarray}
[\overline{a}a] 
& = &
[B'] - [B].
\label{[a'a]_is_[a']_plus_[a]_3}
\end{eqnarray}
Now equation~\eqref{[a'a]_is_[a']_plus_[a]} follows from
equations~\eqref{[a'a]_is_[a']_plus_[a]_1},~\eqref{[a'a]_is_[a']_plus_[a]_2}, 
and~\eqref{[a'a]_is_[a']_plus_[a]_3}.
Now one easily checks that equations~\eqref{conjugation_invariance_of_[a|},%
~\eqref{direct_sum_and_[a|},~\eqref{[id]_gives_zero} and 
\eqref{[a'a]_is_[a']_plus_[a]} imply that the homomorphism $\beta$ 
announced in~\eqref{beta_K_1(R_Sigma)_to_K_1(R,Sigma)} is well-defined.
One easily checks that $\beta$ is an inverse to the homomorphism
$\alpha$ appearing in the statement of Theorem~\ref{the:K_1(R,Sigma)_and_K_1(R_Sigma)}.
This finishes the proof of Theorem~\ref{the:K_1(R,Sigma)_and_K_1(R_Sigma)}.
\end{proof}


\subsection{Schofield's localization sequence}
\label{subsec:Schofields_localization_sequence}

The proofs of this paper are motivated by Schofield's construction of a localization
sequence
\[
K_1(R) \to K_1(R_{\Sigma}) \to K_1(\calt) \to K_0(R) \to K_0(R_{\Sigma})
\]
where $\calt$ is the full subcategory of the category of the finitely presented $R$-modules
whose objects are cokernels of elements in $\Sigma$, 
see~\cite[Theorem~5.12 on page~60]{Schofield(1985)}.  
Under certain conditions this sequence has been extended to the
left in~\cite{Neeman(2007),Neeman-Ranicki(2004)}.  Notice that in connection with
potential proofs of the Atiyah Conjecture it is important to figure out under which
condition $K_0(FG) \to K_0(\cald(G;F))$ is surjective for a torsionfree group $G$ and a
subfield $F \subseteq \IC$, see~\cite[Theorem~10.38 on page~387]{Lueck(2002)}.  In this
connection the question becomes interesting whether $G$ has property (UL), see
Subsection~\ref{subsec:The_property_(UL)}, and how to continue the sequence above to the right.


\typeout{--------------------------   Section 2: Groups with property ULA ---------------------------}

\section{Groups with property (ULA)}
\label{sec:Groups_with_property_(ULA)}

Throughout this section let $F$ be a field with $\IQ \subseteq F \subseteq \IC$.


\subsection{Review of division and rational closure}
\label{subsec:Review_of_division_closure}

Let $R$ be a subring of the ring $S$. The \emph{division closure} $\cald(R \subseteq S) \subseteq S$ 
is the smallest subring of $S$ which contains $R$ and is division closed,
i.e., any element $x \in \cald(R \subset S)$ which is invertible in $S$ is already invertible in
$\cald(R \subseteq S)$. The \emph{rational closure} $\calr(R \subseteq S) \subseteq S$ is
the smallest subring of $S$ which contains $R$ and is rationally
closed, i.e., for every
natural number $n$ and matrix $A \in M_{n,n}(\cald(R \subseteq S))$ which is invertible in
$S$, the matrix $A$  is already invertible over $\calr(R \subseteq S)$. The division closure and the
rational closure always exist. Obviously 
$R \subseteq \cald(R \subseteq S) \subseteq \calr(R \subseteq S) \subseteq S$.

Consider an inclusion of rings $R \subseteq S$. Let $\Sigma(R \subseteq S)$ the set of all
square matrices over $R$ which become invertible over $S$.  Then there is a canonical
epimorphism of rings from the universal localization of $R$ with respect to 
$\Sigma(R \subseteq S)$ to the rational closure of $R$ in $S$,
see~\cite[Proposition~4.10 (iii)]{Reich(2006)}
\begin{eqnarray}
\lambda \colon R_{\Sigma(R \subseteq S)} \to \calr(R \subseteq S).
\label{lambda_colon_R_Sigma_to_calr}
\end{eqnarray}
Recall that we have inclusions $R \subseteq \cald(R \subseteq S) \to \calr(R \subseteq S)\subseteq S$.

Consider a group $G$. Let $\caln(G)$ be the group von Neumann algebra which can be
identified with the algebra $\calb(L^2(G),L^2(G))^G$ of bounded $G$-equivariant operators
$L^2(G) \to L^2(G)$.  Denote by $\calu(G)$ the algebra of operators which are affiliated
to the group von Neumann algebra.  This is the same as the Ore localization of $\caln(G)$
with respect to the multiplicatively closed subset of non-zero divisors in $\caln(G)$,
see~\cite[Chapter~8]{Lueck(2002)}. By the right regular representation we can embed 
$\IC G$ and hence also $FG$ as a subring in $\caln(G)$.  We will denote by $\calr(G;F)$ and
$\cald(G;F)$ the division and the rational closure of $FG$ in $\calu(G)$.  So we get a
commutative diagram of inclusions of rings
\[
\xymatrix@!C= 8em{
FG \ar[r] \ar[d]
&
\caln(G) \ar[dd]
\\
\cald(G;F) \ar[d]
&
\\
\calr(G;F) \ar[r]
& \calu(G)}
\]


\subsection{Review of the Atiyah Conjecture for torsionfree groups}
\label{subsec:Review_of_the_Atiyah_Conjecture_for_torsionfree_groups}

Recall that there is a dimension function $\dim_{\caln(G)}$ defined for all (algebraic) $\caln(G)$-modules,
see~\cite[Section~6.1]{Lueck(2002)}.

\begin{definition}[Atiyah Conjecture with coefficients in $F$]
  \label{def:Atiyah_Conjecture_with_coefficients_in_F}
  We say that a torsionfree group $G$ satisfies the \emph{Atiyah Conjecture with
    coefficients in $F$} if for any matrix $A \in M_{m,n}(FG)$ the von Neumann dimension
  $\dim_{\caln(G)}(\ker(r_A))$ of the kernel of the $\caln(G)$-homomorphism $r_A \colon
  \caln(G)^m \to \caln(G)^n$ given by right multiplication with $A$ is an integer.
\end{definition}

\begin{theorem}[Status of the Atiyah Conjecture]
\label{the:Status_of_the_Atiyah_Conjecture}\
\
\begin{enumerate}

\item \label{the:Status_of_the_Atiyah_Conjecture:subgroups}
If the torsionfree group $G$ satisfies the Atiyah Conjecture with coefficients in $F$, 
then also each of its subgroups satisfy the Atiyah Conjecture with coefficients in $F$;

\item \label{the:Status_of_the_Atiyah_Conjecture:subfields}
If the torsionfree group $G$ satisfies the Atiyah Conjecture with coefficients in $\IC$, 
then $G$ satisfies the Atiyah Conjecture with coefficients in $F$;

\item \label{the:Status_of_the_Atiyah_Conjecture:skew_field}
The torsionfree group $G$ satisfies the Atiyah Conjecture with coefficients in $F$ if and only if
$\cald(G;F)$ is a skew field;

\label{the:Status_of_the_Atiyah_Conjecture:cald_is_calr}
If the torsionfree group $G$ satisfies the Atiyah Conjecture with coefficients in $F$, then
the rational closure $\calr(G;F)$ agrees with the division closure $\cald(G;F)$;

\item \label{the:Status_of_the_Atiyah_Conjecture:Linnell} Let $\calc$ be the smallest
  class of groups which contains all free groups and is closed under directed unions and
  extensions with elementary amenable quotients.  Suppose that $G$ is a torsionfree group
  which belongs to $\calc$.

Then $G$ satisfies the Atiyah Conjecture with coefficients in $\IC$;

\item \label{the:Status_of_the_Atiyah_Conjecture:3-manifold_not_graph} Let $G$ be an infinite group which is the
  fundamental group of a compact connected  orientable irreducible $3$-manifold $M$ with empty or toroidal
  boundary. Suppose that one of the following conditions is satisfied:

 \begin{itemize}
  \item $M$ is not a closed graph manifold;
  \item $M$ is a closed graph manifold which admits a Riemannian metric of  non-positive sectional curvature.
  \end{itemize}
  
 Then $G$ is torsionfree and belongs to $\calc$. In particular $G$ satisfies the Atiyah Conjecture with coefficients in $\IC$;

\item \label{the:Status_of_the_Atiyah_Conjecture:approx}
Let $\cald$ be the smallest  class of groups such that
\begin{itemize}

\item The trivial group belongs to $\cald$;

\item If $p\colon G \to A$ is an epimorphism of a torsionfree group $G$ onto an
elementary amenable group $A$ and if $p^{-1}(B) \in \cald$ for every finite group
$B \subset A$, then $G \in \cald$;

\item $\cald$ is closed under taking subgroups;

\item $\cald$ is closed under colimits and inverse limits over directed systems.

\end{itemize}

If the group $G$ belongs to $\cald$,
then $G$ is torsionfree and the Atiyah Conjecture with coefficients in $\overline{\IQ}$  holds for $G$.

The class $\cald$ is closed under direct sums, direct products and free
products. Every residually torsionfree elementary amenable group
belongs to $\cald$;

\end{enumerate}

\end{theorem}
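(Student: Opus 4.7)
The plan is to assemble this theorem from established results in the literature, since the statement is essentially a survey rather than a freshly proved result. I would organize the proof into three groups: elementary inheritance properties, Linnell's characterization in terms of $\cald(G;F)$, and the deep classification theorems of Linnell and Schick.

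First I would dispose of parts (i), (ii), (iii), and the unlabeled claim that $\calr(G;F) = \cald(G;F)$. For part (i), I would use that the von Neumann dimension is compatible with induction: for a subgroup $H \le G$ and a matrix $A$ over $FH$, one has a natural isomorphism $\caln(G)\otimes_{\caln(H)}\ker(r_A)\cong\ker(r_A^G)$ and the dimensions agree, so integrality on $G$ descends to $H$. Part (ii) is immediate, since any matrix over $FG$ is a matrix over $\IC G$ with the same kernel. For part (iii) I would invoke Linnell's characterization: $\cald(G;F)$ is von Neumann regular inside $\calu(G)$, and for torsionfree $G$ it is a skew field if and only if the $\caln(G)$-dimensions of all finitely presented modules over $FG$ are integers. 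Once $\cald(G;F)$ is a skew field it is automatically rationally closed in $\calu(G)$, so the unlabeled claim $\calr(G;F) = \cald(G;F)$ follows for free.

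Parts (iv), (v), and (vi) are the deep inputs. For (iv), I would cite Linnell's theorem, whose proof inducts along the defining operations of $\calc$: free groups are handled by Cohn's construction of a skew field of fractions for their group algebras inside $\calu(G)$, directed unions are handled by a continuity argument for $\dim_{\caln(G)}$, and the hard step is closure under extensions with elementary amenable quotients, which rests on a careful crossed-product analysis of $\cald$ together with a dimension comparison inside $\calu(G)$. Part (vi) follows by Schick's induction scheme; the class $\cald$ has been engineered so that its defining operations preserve the Atiyah Conjecture with coefficients in $\overline{\IQ}$, using $L^2$-Betti-number approximation for the colimit/inverse limit axiom and a variant of Linnell's extension step for the epimorphism axiom. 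For part (v), the plan is to place $\pi_1(M)$ in $\calc$ and then invoke (iv): the non-graph irreducible case is virtually fibered via geometrization and the work of Agol and Wise, so $\pi_1(M)$ is virtually an extension of a surface group by $\IZ$, and closure of $\calc$ under extensions with finite quotients finishes the argument; the closed graph-manifold case with a non-positively curved metric is handled by Leeb's geometric results producing a suitable fibered structure.

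The main obstacle is part (iv), namely Linnell's theorem itself. The extension step---showing that $\cald(G;F)$ remains a skew field whenever $1 \to N \to G \to A \to 1$ has elementary amenable quotient $A$ and $\cald(N;F)$ is already a skew field---is genuinely delicate: one has to construct $\cald(G;F)$ inside $\calu(G)$ via crossed products of $\cald(N;F)$ by $A$ and verify that integrality of $\dim_{\caln(G)}$ is preserved through the extension. Parts (v) and (vi) are essentially structural reductions to (iv) combined with substantial geometric or analytic input, but they introduce no new ring-theoretic obstructions beyond those already faced in (iv).
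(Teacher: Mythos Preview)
Your approach matches the paper's: the theorem is a compendium of known results, and the paper proves it by citing L\"uck's book for (i)--(iii), Linnell for (iv), the virtual fibering theorems for (v), and Schick and Dodziuk--Linnell--Mathai--Schick--Yates for (vi), with essentially the same logical organization you propose.

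Two small corrections are worth noting. First, in (iii) your claim that $\cald(G;F)$ is von Neumann regular inside $\calu(G)$ is not part of the standard argument and is not known without already assuming something like the Atiyah Conjecture; the proof in L\"uck's Lemma~10.39 proceeds instead by a direct dimension argument showing that integrality forces every nonzero element of $\cald(G;F)$ to be a non-zero-divisor in $\calu(G)$, hence a unit there. Second, for the non-positively curved closed graph-manifold case in (v), the relevant input is Liu's virtual cubulation theorem (combined with Agol and Przytycki--Wise), not Leeb; the paper's argument is exactly your virtual-fibering reduction, concluding that $\pi_1(M)$ is virtually (surface group)-by-$\IZ$ and hence in $\calc$.
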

\begin{proof}~\eqref{the:Status_of_the_Atiyah_Conjecture:subgroups} 
  This follows from~\cite[Theorem~6.29~(2) on page~253]{Lueck(2002)}.
  \\[1mm]~\eqref{the:Status_of_the_Atiyah_Conjecture:subfields} This is obvious.
  \\[1mm]\eqref{the:Status_of_the_Atiyah_Conjecture:skew_field} This is proved in the
  case $F = \IC$ in~\cite[Lemma~10.39 on page~388]{Lueck(2002)}. The proof goes through
  for an arbitrary field $F$ with $\IQ \subseteq F \subseteq \IC$ without   modifications. 
  \\[1mm]~\eqref{the:Status_of_the_Atiyah_Conjecture:Linnell} This is due to Linnell,
  see for instance~\cite{Linnell(1993)} or~\cite[Theorem~10.19 on page~378]{Lueck(2002)}.
  \\[1mm]~\eqref{the:Status_of_the_Atiyah_Conjecture:3-manifold_not_graph} It suffices to
  show that $G = \pi_1(M)$ belongs to the class $\calc$ appearing in
  assertion~\eqref{the:Status_of_the_Atiyah_Conjecture:Linnell}.  As explained
  in~\cite[Section~10]{Dubois-Friedl-Lueck(2014Alexander)}, we conclude from
  combining papers by Agol, Liu, Przytycki-Wise, and 
  Wise~\cite{Agol(2008),Agol(2013),Liu(2013),Przytycki-Wise(2012), Przytycki-Wise(2014),Wise(2012raggs),Wise(2012hierachy)}
  that there exists a finite normal covering $p \colon \overline{M} \to M$ and a fiber
  bundle $S \to \overline{M} \to S^1$ for some compact connected orientable surface $S$. Hence it
  suffices to show that $\pi_1(S)$ belongs to $\calc$.  If $S$ has non-empty boundary,
  this follows from the fact that $\pi_1(S)$ is free. If $S$ is closed, the commutator
  subgroup of $\pi_1(S)$ is free and hence $\pi_1(S)$ belongs to $\calc$. Now
  assertion~\eqref{the:Status_of_the_Atiyah_Conjecture:3-manifold_not_graph} follows from
  assertion~\eqref{the:Status_of_the_Atiyah_Conjecture:Linnell}.
  \\[1mm]~\eqref{the:Status_of_the_Atiyah_Conjecture:approx} This result is due to Schick for $\IQ$
  see for instance~\cite{Schick(2001b)} or~\cite[Theorem~10.22 on page~379]{Lueck(2002)}
and for $\overline{\IQ}$ due to 
Dodziuk-Linnell-Mathai-Schick-Yates~\cite[Theorem~1.4]{Dodziuk-Linnell-Mathai-Schick_Yates(2003)}
  \end{proof}

For more information and further explanations about the Atiyah Conjecture
we refer for instance to~\cite[Chapter~10]{Lueck(2002)}.


\subsection{The property (UL)}
\label{subsec:The_property_(UL)}

\begin{definition}[Property (UL)]\label{def:property:(UL)}
We say that a group $G$ has the property (UL) with respect to $F$, if the canonical epimorphism
\[
\lambda \colon FG_{\Sigma(FG \subseteq \calu(G,F))} \to \calr(G;F)
\]
defined in~\eqref{lambda_colon_R_Sigma_to_calr} is bijective.
\end{definition}

Next we investigate which groups $G$ are known to have property (UL).

Let $\cala$ denote the class of groups consisting of the
finitely generated free groups and the amenable groups.
If $\caly$ and $\calz$ are classes of groups,
define $\Loc(\caly)
= \{G \mid \text{ every finite subset of }G \text{ is
contained in a } \caly \text{-group}\}$, and
$\caly\calz = \{G \mid \text{ there exists } H\lhd G
\text{ such that } H \in \caly \text{ and } G/H \in
\calz\}$.  Now define
$\calx$ to be the smallest class of groups which contains
$\cala$ and is closed under directed unions and group
extension.  Next for each ordinal $a$, define a class of groups
$\calx_a$ as follows:
\begin{itemize}
\item
$\calx_0 = \{1\}$.

\item
$\calx_a = \Loc(\calx_{a - 1}\cala)$ if $a$ is
a successor ordinal.

\item
$\calx_a = \bigcup_{b < a} \calx_b$ if $a$ is a
limit ordinal.
\end{itemize}

\begin{lemma} \label{lem:Ldescription}\
\begin{enumerate}
\item \label{lem:Ldescription:a}
Each $\calx_a$ is subgroup closed.

\item \label{lem:Ldescription:b}
$\calx = \bigcup_{a \ge 0} \calx_a$.

\item \label{lem:Ldescription:c}
$\calx$ is subgroup closed.
\end{enumerate}

\end{lemma}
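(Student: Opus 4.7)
The plan is to handle the three parts together via transfinite induction, with part~\eqref{lem:Ldescription:a} feeding into parts~\eqref{lem:Ldescription:b} and~\eqref{lem:Ldescription:c}.

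For \eqref{lem:Ldescription:a} I would induct on $a$; the base case $a = 0$ and the limit case are immediate. At a successor $a$, let $K \leq G \in \calx_a = \Loc(\calx_{a-1}\cala)$ and let $S \subseteq K$ be finite. Choose $H \leq G$ with $S \subseteq H$ and $H \in \calx_{a-1}\cala$, so that $N \lhd H$ with $N \in \calx_{a-1}$ and $H/N \in \cala$. Setting $T = \langle S \rangle \leq K \cap H$, the induction hypothesis applied to $T \cap N \leq N$ yields $T \cap N \in \calx_{a-1}$, while $T/(T \cap N) \cong TN/N \leq H/N$ is a finitely generated subgroup of an $\cala$-group, hence itself in $\cala$ (finitely generated subgroups of finitely generated free groups are free, and subgroups of amenable groups are amenable). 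Thus $T \in \calx_{a-1}\cala$ and $K \in \Loc(\calx_{a-1}\cala) = \calx_a$.

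For \eqref{lem:Ldescription:b}, first note that $\calx_a \subseteq \calx_a\cala \subseteq \Loc(\calx_a\cala) = \calx_{a+1}$, so the sequence $(\calx_a)$ is increasing. The inclusion $\bigcup_a \calx_a \subseteq \calx$ is proved by transfinite induction on $a$: the successor step uses that $\calx_{a-1}\cala \subseteq \calx$ by extension closure, and that any $G \in \Loc(\calx_{a-1}\cala)$ is the directed union of its finitely generated subgroups $\langle S \rangle$, each of which lies in $\calx_{a-1}\cala \subseteq \calx$ by the same finitely generated trick as in~\eqref{lem:Ldescription:a}. For the reverse inclusion it suffices to show $\bigcup_a \calx_a$ contains $\cala$ (clear, via $\calx_1$) and is closed under directed unions and extensions. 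For a directed union $G = \bigcup_i G_i$ with $G_i \in \calx_{a_i}$, an ordinal $a$ above all $a_i$ places every finite subset of $G$ in some $G_i \leq G$ with $G_i \in \calx_a \subseteq \calx_a\cala$, so $G \in \calx_{a+1}$.

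The main obstacle is closure of $\bigcup_a \calx_a$ under extensions. I would prove by transfinite induction on $b$ that whenever $H \lhd G$ with $H \in \bigcup_a \calx_a$ and $G/H \in \calx_b$, then $G \in \bigcup_a \calx_a$. The base case $b = 0$ and limit case are straightforward. For $b = c+1$, let $p \colon G \to G/H$ and pick a finite $S \subseteq G$; its image in $G/H$ lies in some $Q' \leq G/H$ with $Q' \in \calx_c\cala$, say $N' \lhd Q'$ with $N' \in \calx_c$ and $Q'/N' \in \cala$. The pullback $p^{-1}(N')$ is an extension of $H$ by $N'$, so by the inductive hypothesis $p^{-1}(N') \in \calx_{d_S}$ for some ordinal $d_S$, whence $p^{-1}(Q') \in \calx_{d_S}\cala \subseteq \calx_{d_S+1}$ contains $S$. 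Taking $e = \sup_S(d_S + 1)$ (an ordinal, since the finite subsets of $G$ form a set), every finite subset of $G$ lies in a subgroup of $G$ contained in $\calx_e \subseteq \calx_e\cala$, so $G \in \Loc(\calx_e\cala) = \calx_{e+1}$.

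Part \eqref{lem:Ldescription:c} is then immediate: given $K \leq G \in \calx$, part~\eqref{lem:Ldescription:b} gives an ordinal $a$ with $G \in \calx_a$, and part~\eqref{lem:Ldescription:a} gives $K \in \calx_a \subseteq \calx$.
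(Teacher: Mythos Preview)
Your argument is correct and follows the same overall architecture as the paper: prove part~\eqref{lem:Ldescription:a} by transfinite induction, deduce~\eqref{lem:Ldescription:c} from~\eqref{lem:Ldescription:a} and~\eqref{lem:Ldescription:b}, and for~\eqref{lem:Ldescription:b} show that $\bigcup_a \calx_a$ is closed under directed unions and extensions. The one substantive difference is in the extension step. The paper proves the clean ordinal-arithmetic inclusion $\calx_a\calx_b \subseteq \calx_{a+b}$ by induction on $b$, using the general containment $\caly\,\Loc(\calz) \subseteq \Loc(\caly\calz)$ at the successor stage; this gives a sharp quantitative bound on where an extension lands. You instead argue directly: given $H \lhd G$ with $H \in \bigcup_a\calx_a$ and $G/H \in \calx_{c+1}$, you pull back the local $\calx_c\cala$-structure of $G/H$ through $p$, invoke the inductive hypothesis on each $p^{-1}(N')$, and then take a supremum $e = \sup_S(d_S+1)$ over all finite subsets to land $G$ in $\calx_{e+1}$. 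Your route avoids ordinal arithmetic entirely and is perhaps more transparent, at the cost of losing the explicit formula $\calx_a\calx_b \subseteq \calx_{a+b}$ and producing a less controlled ordinal. You also spell out part~\eqref{lem:Ldescription:a} and the inclusion $\bigcup_a\calx_a \subseteq \calx$ in more detail than the paper, which dismisses both as easy; your use of the finitely generated subgroup $T = \langle S\rangle$ and the fact that finitely generated subgroups of $\cala$-groups lie in $\cala$ is exactly the point that makes these steps work.
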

\begin{proof}~\eqref{lem:Ldescription:a}
This is easily proved by induction on $a$.
\\[1mm]~\eqref{lem:Ldescription:b}
Set $\caly = \bigcup_{a \ge 0} \calx_a$.
Obviously $\calx \supseteq \caly$.
We prove the reverse inclusion by showing that $\caly$ is
closed under directed unions and group extension.  The former is
obvious, 
because if the group $G$ is the directed union of subgroups $G_i$ and
$a_i$ is the least ordinal such that $G_i \in
\calx_{a_i}$, we set $a = \sup_i a_i$ and
then $G \in \calx_{a + 1}$.
For the latter, we show that $\calx_a
\calx_b \subseteq \calx_{a+b}$ by induction on $b$, the
case $b=0$ being obvious.  If $b$ is a successor ordinal, write $b =
c+1$.  Then
\begin{align*}
\calx_a \calx_b &= \calx_a
(\Loc({X}_c) \cala) \subseteq \Loc (\calx_a\calx_c)
\cala\\
&\subseteq \Loc(\calx_{a+c}) \cala \quad \text{by
induction}\\
&\subseteq \calx_{a+c+1} = \calx_{a+b}.
\end{align*}
On the other hand, if $b$ is a limit ordinal, then
\begin{align*}
\calx_a \calx_b &= \calx_a \left( \bigcup_{c<b}
\calx_c \right) = \bigcup_{c<b} \calx_a \calx_c\\
&\subseteq \bigcup_{c<b} \calx_{a+c} \quad \text{by
induction}\\
&\subseteq \calx_{a+b}
\end{align*}
as required.
\\[1mm]~\eqref{lem:Ldescription:c}
This follows from assertions~\eqref{lem:Ldescription:a} and~\eqref{lem:Ldescription:b}.
\end{proof}

\begin{lemma} \label{lem:Llocally}
Let $G = \bigcup_{i \in I} G_i$ be groups such that given
$i,j \in I$, there exists $l \in I$ such that
$G_i,G_j \subseteq G_l$.  Write $\Sigma = \Sigma(FG \subseteq \calu(G))$ and
$\Sigma_i = \Sigma(FG_i \subseteq \calu(G_i))$ for $i \in I$.  Suppose the
identity map on $FG_i$ extends to an isomorphism $\lambda_i \colon
(FG_i)_{\Sigma_i}
\to \calr(G_i;F)$ for all $i\in I$.  

Then the identity map on $FG$ extends to an isomorphism 
$\lambda \colon FG_{\Sigma} \to \calr(G;F)$.
\end{lemma}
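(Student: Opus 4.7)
The plan is to identify both $FG_{\Sigma}$ and $\calr(G;F)$ as colimits (directed unions) of the analogous objects for the subgroups $G_i$, and then to assemble the given isomorphisms $\lambda_i$ into a colimit isomorphism which is precisely $\lambda$. Everything rests on one standard background fact from the theory of affiliated operators: the inclusion $G_i \subseteq G$ induces a ring inclusion $\calu(G_i) \subseteq \calu(G)$, and a square matrix over $\calu(G_i)$ is invertible in $\calu(G)$ if and only if it is already invertible in $\calu(G_i)$. (Informally, weak isomorphism is detected by $\caln(G)$-dimension, which agrees with the $\caln(G_i)$-dimension under restriction/induction.)

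Granted this, I would first verify that $\Sigma = \bigcup_{i \in I} \Sigma_i$ viewed as a set of matrices over $FG = \bigcup_i FG_i$. For $A \in \Sigma$, its finitely many entries live in some $FG_i$ by directedness, and invertibility in $\calu(G)$ transfers down to invertibility in $\calu(G_i)$, giving $A \in \Sigma_i$. The compatibility $\Sigma_i \subseteq \Sigma_j$ for $G_i \subseteq G_j$ is immediate from the same fact, so the universal property of universal localization produces transition maps $(FG_i)_{\Sigma_i} \to (FG_j)_{\Sigma_j}$, making $i \mapsto (FG_i)_{\Sigma_i}$ into a directed system whose colimit $L$ is $\Sigma$-inverting for $FG$. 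A standard universal-property argument (using that every matrix in $\Sigma$ is handled at some finite stage) then identifies $L$ with $FG_{\Sigma}$.

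Next I would establish the parallel statement for rational closures, namely $\calr(G;F) = \bigcup_{i \in I} \calr(G_i;F)$ inside $\calu(G)$. For the containment $\supseteq$: the union is a subring of $\calu(G)$ containing $FG$; any matrix over the union has its (finitely many) entries in some $\calr(G_j;F)$ by directedness, and if it is invertible in $\calu(G)$ then, by the background fact, it is invertible in $\calu(G_j)$, so its inverse lies over $\calr(G_j;F)$ — hence the union is rationally closed. For $\subseteq$: one checks that $\calr(G;F) \cap \calu(G_i)$ is rationally closed in $\calu(G_i)$ and contains $FG_i$, so by minimality it contains $\calr(G_i;F)$, whence $\calr(G;F) \supseteq \bigcup_i \calr(G_i;F)$ forces equality.

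Finally, under the identifications from the previous two steps, the isomorphisms $\lambda_i$ form a morphism of directed systems from $\bigl((FG_i)_{\Sigma_i}\bigr)_{i \in I}$ to $\bigl(\calr(G_i;F)\bigr)_{i \in I}$; naturality in $i$ is forced by uniqueness in the universal property of the $(FG_i)_{\Sigma_i}$, since every arrow in sight restricts to the canonical inclusion $FG_i \hookrightarrow \calu(G)$. Passing to colimits yields the desired ring isomorphism $\lambda \colon FG_{\Sigma} \xrightarrow{\cong} \calr(G;F)$, which extends the identity on $FG$. The only genuinely external input is the invertibility transfer between $\calu(G_i)$ and $\calu(G)$; this is where I would spend real effort, though it is a standard consequence of the dimension formula for $\caln(G_i) \hookrightarrow \caln(G)$ and should not be a serious obstacle.
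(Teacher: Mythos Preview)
Your argument is correct. It differs in structure from the paper's proof. The paper does not identify both sides as colimits; instead it uses that $\lambda$ is automatically surjective (this is the general epimorphism recorded earlier in the paper for any ring inclusion), and then builds only a \emph{left} inverse: from the easy inclusion $\Sigma_i \subseteq \Sigma$ one gets maps $\mu_i \colon (FG_i)_{\Sigma_i} \to FG_{\Sigma}$, sets $\nu_i = \mu_i \circ \lambda_i^{-1}$, checks compatibility to obtain $\nu \colon \bigcup_i \calr(G_i;F) \to FG_{\Sigma}$, and observes that $\nu \circ \lambda$ restricts to the identity on $FG$, hence equals $\id_{FG_{\Sigma}}$ by the universal property. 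Your route instead identifies $FG_{\Sigma} \cong \colim_i (FG_i)_{\Sigma_i}$ and $\calr(G;F) = \bigcup_i \calr(G_i;F)$ and passes the $\lambda_i$ to the colimit. Both arguments ultimately rest on the same external input you isolate, the invertibility transfer between $\calu(G_i)$ and $\calu(G)$: you use it explicitly for $\Sigma = \bigcup_i \Sigma_i$ and for rational closedness of the union, while the paper needs it implicitly so that $\nu \circ \lambda$ is globally defined (i.e., so that $\im \lambda \subseteq \bigcup_i \calr(G_i;F)$). Your version is more self-contained and gives the structural picture; the paper's is shorter by exploiting the free surjectivity. One cosmetic point: in your paragraph on $\calr(G;F) = \bigcup_i \calr(G_i;F)$ the labels ``$\supseteq$'' and ``$\subseteq$'' are swapped relative to the arguments they head.
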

\begin{proof}
  By definition, the identity map on $FG$ extends to an epimorphism 
  $\lambda \colon  FG_{\Sigma} \to \calr(G;F)$.  We need to show that $\lambda$ is injective, 
  and here we   follow the proof of~\cite[Lemma~13.5]{Linnell(1998)}.  Clearly 
  $\Sigma_i \subseteq   \Sigma$ for all $i\in I$ and thus the inclusion map 
  $FG_i \hookrightarrow  FG$ extends to a map $\mu_i \colon (FG_i)_{\Sigma_i} \to FG_{\Sigma}$ for all 
  $i \in  I$.  Since $\lambda_i$ is an isomorphism, we may define
  $\nu_i =\mu_i \circ \lambda_i^{-1}
\colon \calr(G_i;F) \to FG_{\Sigma}$ for all $i\in I$.
If $G_i \subseteq G_j$, then $\calr(G_i;F)\subseteq  \calr(G_j;F)$ 
and we let $\psi_{ij} \colon \calr(G_i;F)\to \calr(G_j;F)$ denote the natural
  inclusion.  Observe that
$\mu_i(x) = \mu_j \lambda_j^{-1} \psi_{ij} \lambda_i(x)$ for all $x$
in the image of $FG_i$ in $(FG_i)_{\Sigma_i}$ and therefore by the
universal property, $\mu_i = \mu_j \lambda_j^{-1} \psi_{ij} \lambda_i$ and
hence $\mu_i \lambda_i^{-1} = \mu_j \lambda_j^{-1} \psi_{ij}$.  Thus
$\nu_i = \nu_j \psi_{ij}$ and the $\nu_i$ fit together to give a map
$\nu \colon \bigcup_{i \in I} \calr(G_i;F)\to FG_{\Sigma}$.  It is
easily checked that
$\nu \circ \lambda \colon FG_{\Sigma} \to FG_{\Sigma}$ is a map which
is the identity on the image of $FG$ in $FG_{\Sigma}$
and hence by the universal property of localization, $\nu \circ \lambda$ is the
  identity.  This proves that $\lambda$ is
injective, as required.
\end{proof}

If $G$ is a group and $\alpha$ is an automorphism of $G$, then
$\alpha$ extends to an automorphism of $\calu(G)$, which we
shall also denote by $\alpha$.  This is not only an algebraic
automorphism, but is also a homeomorphism with respect to the various
topologies on $\calu(G)$.

\begin{lemma} \label{lem:Lauto}
If  $\alpha$ is an automorphism of $G$, then $\alpha(\cald(G;F)) = \cald(G;F)$.
\end{lemma}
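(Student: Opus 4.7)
The plan is straightforward: use the minimality characterization of the division closure together with the fact that the extended automorphism $\alpha$ on $\calu(G)$ is a ring automorphism that preserves $FG$.

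First I would observe that since $\alpha$ permutes $G$ and acts $F$-linearly, it sends the group ring $FG$ to itself, i.e., $\alpha(FG) = FG$. Since $\alpha$ extends to a ring automorphism of $\calu(G)$, the image $\alpha(\cald(G;F))$ is a subring of $\calu(G)$ containing $FG$.

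The key step is to verify that $\alpha(\cald(G;F))$ is division closed in $\calu(G)$. Suppose $x \in \alpha(\cald(G;F))$ is invertible in $\calu(G)$, say $x = \alpha(y)$ with $y \in \cald(G;F)$. Applying $\alpha^{-1}$ to the identity $x \cdot x^{-1} = 1$ gives $y \cdot \alpha^{-1}(x^{-1}) = 1$, so $y$ is invertible in $\calu(G)$ with inverse $\alpha^{-1}(x^{-1})$. Since $\cald(G;F)$ is division closed, $y^{-1} \in \cald(G;F)$, and therefore $x^{-1} = \alpha(y^{-1}) \in \alpha(\cald(G;F))$.

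By the minimality defining property of the division closure, the inclusion $FG \subseteq \alpha(\cald(G;F))$ together with division closedness forces $\cald(G;F) \subseteq \alpha(\cald(G;F))$. Running the same argument with $\alpha^{-1}$ in place of $\alpha$ yields $\cald(G;F) \subseteq \alpha^{-1}(\cald(G;F))$, i.e., $\alpha(\cald(G;F)) \subseteq \cald(G;F)$. The two inclusions give the desired equality. There is no real obstacle here; the only point that requires a moment's care is the verification that conjugating a division-closed subring by a ring automorphism of the ambient ring preserves division closedness, which is the calculation carried out above.
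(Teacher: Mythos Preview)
Your proof is correct and follows exactly the approach the paper has in mind: the paper's proof is the single sentence ``This is clear, because $\alpha(FG) = FG$,'' and you have simply unpacked what ``clear'' means---namely, that a ring automorphism of $\calu(G)$ sends division-closed subrings to division-closed subrings, so $\alpha(\cald(G;F))$ is a division-closed subring containing $FG$, and minimality (applied to both $\alpha$ and $\alpha^{-1}$) gives equality.
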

\begin{proof}
This is clear, because $\alpha(FG) = FG$.
\end{proof}

\begin{lemma} \label{lem:Lcrossedproduct} Let $H\lhd G$ be groups and let $\cald(H;F)G$
  denote the subring of $\cald(G;F)$ generated by $\cald(H;F)$ and $G$.  

  Then for a   suitable crossed product, $\cald(H;F) G \cong \cald(H;F)*G/H$ by a map 
  which extends the identity on $\cald(H;F)$ and for $g \in G$ sends
$\cald(H;F)\cdot g$ to $\cald(H;F) * Hg$.
\end{lemma}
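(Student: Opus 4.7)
The plan is to build an explicit isomorphism via a standard crossed product presentation. First I would observe that for any $g \in G$, conjugation by $g$ is an automorphism of $G$ preserving $H$ (since $H \lhd G$); it extends to an algebra automorphism of $\calu(G)$ whose restriction to $\calu(H) \subseteq \calu(G)$ is exactly the extension to $\calu(H)$ of the resulting group automorphism of $H$, so Lemma~\ref{lem:Lauto} (applied with $H$ in place of $G$) guarantees that conjugation by $g$ preserves $\cald(H;F) \subseteq \calu(H) \subseteq \calu(G)$. I would then fix a set-theoretic section $s \colon G/H \to G$, $c \mapsto g_c$, of the quotient map, normalised so that the trivial coset goes to $1$, form the $2$-cocycle $\tau(c,c') := g_c g_{c'} g_{cc'}^{-1} \in H$ and the outer action $\alpha_c := c_{g_c}|_{\cald(H;F)}$, and assemble from this data the crossed product $\cald(H;F) * G/H$, which is a free left $\cald(H;F)$-module on symbols $\{\overline{c}\}_{c \in G/H}$ with multiplication
\[
(d_1 \overline{c_1})(d_2 \overline{c_2}) = d_1 \alpha_{c_1}(d_2) \tau(c_1,c_2) \, \overline{c_1c_2}.
\]

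Second, I would define
\[
\varphi \colon \cald(H;F) * G/H \longrightarrow \cald(G;F), \qquad d\, \overline{c} \longmapsto d\, g_c,
\]
and verify directly, from the identities $g_c d g_c^{-1} = \alpha_c(d)$ and $g_{c_1}g_{c_2} = \tau(c_1,c_2) g_{c_1c_2}$, that $\varphi$ is a ring homomorphism extending the identity on $\cald(H;F)$. The image is closed under multiplication and contains both $\cald(H;F)$ and every element of $G$ (since $g = h g_c$ for $h \in H \subseteq \cald(H;F)$ and $c = Hg$), so it coincides with $\cald(H;F)\, G$. The inverse of $\varphi$, viewed as a map $\cald(H;F)\, G \to \cald(H;F) * G/H$, is precisely the map described in the statement.

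The one substantive point --- and the main obstacle --- is injectivity of $\varphi$: one must show that the section $\{g_c\}_{c \in G/H}$ is left linearly independent over $\cald(H;F)$ inside $\cald(G;F)$. I would handle this with a standard $L^2$-argument. Letting $G$ act on $L^2(G)$ by the right regular representation $g \mapsto R_g$, one has the orthogonal decomposition
\[
L^2(G) = \bigoplus_{c \in G/H} L^2(Hg_c),
\]
each summand of which is invariant under $\caln(H) \supseteq \cald(H;F)$, and each $R_{g_c}$ permutes the summands according to the free and transitive regular action of $G/H$ on itself. Given a putative relation $\sum_c d_c g_c = 0$, applying the corresponding operator to an arbitrary $\xi \in L^2(Hg_{c_0})$ and projecting onto each coset summand isolates the single contribution $d_c R_{g_c}(\xi)$; unitarity of $R_{g_c}$ then forces $d_c = 0$ as an operator on $L^2(G)$, and hence $d_c = 0$ in $\cald(H;F)$. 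This completes the injectivity step, and with it the proof.
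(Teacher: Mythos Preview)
Your proof is correct and follows essentially the same strategy as the paper's: choose a transversal, use Lemma~\ref{lem:Lauto} to see that conjugation by transversal elements preserves $\cald(H;F)$, and then check that the resulting sum $\sum_t \cald(H;F)\cdot t$ is direct. The only difference is in this last step, where the paper simply observes that the larger sum $\sum_{t\in T} \calu(H)\cdot t$ is already direct inside $\calu(G)$, whereas you unpack the independence directly via the right-coset decomposition of $L^2(G)$.
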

\begin{proof}
  Let $T$ be a transversal for $H$ in $G$.  Since $h \mapsto tht^{-1}$ is an
  automorphism of $H$, we see that $t\cdot \cald(H;F)\cdot t^{-1} = \cald(H;F)$ for all $t \in T$ by
  Lemma~\ref{lem:Lauto} and so $\cald(H;F) G = \sum_{t\in T}\cald(H;F) G \cdot t$.  This sum is
  direct because the sum $\sum_{t\in T} \calu(H) \cdot t$ is direct and the result is
  established.
\end{proof}

In the sequel recall that $\calr(G;F) = \cald(G;F)$ holds if $\cald(G;F)$ is a skew field.

\begin{lemma} \label{lem:Lfiniteindex} Let $H \lhd G$ be groups such that $G/H$ is finite
  and $H$ is torsion free.  Assume that  $\cald(H;F)$ is a skew field.
  Set $\Sigma = \Sigma(FG \subseteq \calu(G))$, 
  $\Phi = \Sigma(FH  \subseteq \calu(H))$, and let $\mu \colon FH_{\Phi} \to\cald(H;F)$, 
  $\lambda \colon   FG_{\Sigma} \to \cald(G;F)$ denote the corresponding localization maps.  

  Then  $\cald(G;F)$ is a semisimple artinian ring and agrees with $\calr(G;F)$.
  Furthermore if $\mu$ is an isomorphism,  then so is $\lambda$.
\end{lemma}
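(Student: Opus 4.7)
The plan is twofold: first identify $\cald(G;F)$ as the finite crossed product $\cald(H;F)*G/H$ and show it is semisimple artinian and equal to $\calr(G;F)$; then, assuming $\mu$ is an isomorphism, construct an inverse to $\lambda$ via an identification $FG_{\Sigma} \cong FH_{\Phi} * G/H$.

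For the first part, I would apply Lemma~\ref{lem:Lcrossedproduct} to write $\cald(H;F)G \cong \cald(H;F)*G/H$, a crossed product of the skew field $\cald(H;F)$ with the finite group $G/H$. Since $F \supseteq \IQ$, the skew field $\cald(H;F)$ has characteristic zero, so $|G/H|$ is invertible in it, and the crossed-product version of Maschke's theorem implies $\cald(H;F)*G/H$ is semisimple artinian. Any semisimple artinian subring $A$ of an overring $B$ is automatically rationally closed in $B$, because a matrix over $A$ invertible in $B$ is a non-zero-divisor in the semisimple artinian ring $M_n(A)$ and hence a unit. Thus $\cald(H;F)G$ is rationally (in particular division) closed in $\calu(G)$; as it contains $FG$, it contains both $\cald(G;F)$ and $\calr(G;F)$. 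For the reverse inclusion, note that $\cald(G;F) \cap \calu(H)$ is a division-closed subring of $\calu(H)$ containing $FH$, hence contains $\cald(H;F)$; so $\cald(G;F)$ contains $\cald(H;F)$ and $G$ and therefore contains $\cald(H;F)G$. Combining the inclusions gives $\cald(G;F) = \calr(G;F) = \cald(H;F)*G/H$, which is semisimple artinian.

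For the second part, assume $\mu$ is an isomorphism. Every matrix in $\Phi$ is invertible in $\calu(H) \subseteq \calu(G)$ and therefore lies in $\Sigma$, so universality yields a ring map $\psi \colon FH_{\Phi} \to FG_{\Sigma}$. Each $g \in G$ acts on $H$ by conjugation, hence on $FH$; since this automorphism preserves $\Phi$, universality extends it to $FH_{\Phi}$ (cf.\ Lemma~\ref{lem:Lauto} in spirit). Combining this action with the cocycle from a transversal decomposition in $FG = FH * G/H$ produces a crossed product $FH_{\Phi} * G/H$ together with a ring homomorphism $\Psi \colon FH_{\Phi} * G/H \to FG_{\Sigma}$ that restricts to $\psi$ and sends transversal representatives to themselves. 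Conversely, decomposing each $g \in G$ along the transversal gives a natural inclusion $FG \hookrightarrow FH_{\Phi} * G/H$, and this inclusion is $\Sigma$-inverting: under the isomorphism $FH_{\Phi} * G/H \cong \cald(H;F) * G/H = \cald(G;F)$ induced by $\mu$ and part one, every matrix of $\Sigma$ becomes invertible in the semisimple artinian ring $\cald(G;F)$, hence in $FH_{\Phi} * G/H$. Universality produces $\Xi \colon FG_{\Sigma} \to FH_{\Phi} * G/H$, and checking on the generating data ($FG$ on one side, the image of $FH_{\Phi}$ together with the transversal on the other) shows $\Psi$ and $\Xi$ are mutually inverse. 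Thus $\lambda$ is the composite of $\Xi$ with the isomorphism $FH_{\Phi} * G/H \cong \cald(G;F)$ and is itself an isomorphism.

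The main obstacle is the crossed product bookkeeping in the second stage: one must verify that the $G$-action on $FH_{\Phi}$ obtained by universality and the cocycle inherited from $FG$ are mutually compatible, and that these abstract structures genuinely match what is present in $FG_{\Sigma}$, so that $\Psi$ and $\Xi$ are well defined ring homomorphisms. The semisimplicity of $\cald(H;F)*G/H$ via Maschke and the transfer of invertibility of $\Sigma$-matrices through $\cald(G;F)$ are the technical pillars that make the argument go through.
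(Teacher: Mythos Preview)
Your proposal is correct and follows essentially the same line as the paper: identify $\cald(G;F)$ with the crossed product $\cald(H;F)*G/H$ via Lemma~\ref{lem:Lcrossedproduct}, use Maschke's theorem for semisimplicity and the artinian property for rational closure, and then exploit compatibility of the crossed product with localization (which the paper cites as \cite[Lemma~4.5]{Linnell(1993)} rather than rebuilding by hand) together with the universal property. The paper streamlines your second stage by producing only a one-sided inverse: since $\lambda$ is already surjective, it composes $\lambda$ with $\theta^{-1}$, $\psi\colon \cald(H;F)*G/H \xrightarrow{\cong} FG_{\Phi}$, and the canonical $\rho\colon FG_{\Phi}\to FG_{\Sigma}$ to obtain an endomorphism of $FG_{\Sigma}$ fixing $FG$, which universality forces to be the identity---so your two-sided $\Psi/\Xi$ bookkeeping can be avoided.
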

\begin{proof}
  Let $\cald(H;F)G$ denote the subring of $\cald(G;F)$ generated by $\cald(H;F)$ and $G$.
  Then Lemma~\ref{lem:Lcrossedproduct} shows that for a suitable crossed product, there is
  an isomorphism $\theta \colon \cald(H;F) * G/H \to \cald(H;F)G$ which extends the
  identity map on $\cald(H;F)$.  This ring has dimension $|G/H|$ over the skew field
  $\cald(H;F)$ and is therefore artinian.  Since every matrix over an
artinian ring is either a zero-divisor or invertible (in particular
ever element is either a zero-divisor or invertible), we see that
$\calr(G;F) = \cald(G;F) = \cald(H;F)G$.
Furthermore by Maschke's Theorem, $\cald(H;F)G$
  semisimple artinian.  Now assume that
  $\mu$ is an isomorphism.  We may identify $FG$ with the subring
$FH*G/H$ and then by~\cite[Lemma 4.5]{Linnell(1993)},
there is an isomorphism
  $\psi\colon \cald(H;F) * G/H \to FG_{\Phi}$ which extends the identity map on $FG$.
  Also $\Phi \subseteq \Sigma$, so the identity map on $FG$ extends to a map 
  $\rho \colon   FG_{\Phi} \to FG_{\Sigma}$.  Then 
  $\rho \circ  \psi \circ \theta^{-1} \circ \lambda \colon FG_{\Sigma} \to   FG_{\Sigma}$ is a map extending the 
  identity on $FG$, hence is the identity and the  result follows.
\end{proof}

Recall that the group $G$ is locally indicable if for every a non-trivial finitely generated subgroup
$H$ there exists $N\lhd H$ such that $N/H$ is infinite cyclic.
Also if $R$ is a subring of the skew field $D$ such that
$\mathcal{D}(R \subseteq D) = D$, then we say that $D$ is a field of
fractions for $R$ ($D$ will be noncommutative, i.e.~a skew field in
general).

\begin{definition}
Let $K$ be a skew field, let $G$ be a locally indicable group,
let $K*G$ be a crossed product, and let $D$ be a field of fractions for
$K*G$.  Then we say that
$D$ is a \emph{Hughes-free} \cite[\S 2]{Hughes(1970)},
\cite[pp.~340, 342]{Lewin(1974)},
\cite[Lemma~10.81]{Lueck(2002)},
\cite[p.~1128]{Dicks-Herbera-Sanchez(2004)} field of fractions
for $K*G$ if whenever $N\lhd H \le G$, $H/N$ is infinite cyclic
and $t \in H$ such that $\langle Nt\rangle = H/N$ (i.e.~$t$ generates
$H$ mod $N$), then $\{t^i \mid i \in \mathbb{Z}\}$ is linearly
independent over $\mathcal{D}(K*N \subseteq D)$.
\end{definition}

A key result here is that of Ian Hughes \cite[Theorem]{Hughes(1970)},
\cite[Theorem~7.1]{Dicks-Herbera-Sanchez(2004)}, which states
\begin{theorem}[Hughes's theorem] \label{thm:Hughes}
Let $K$ be a skew field, let $G$ be a locally
indicable group, let $K*G$ be a crossed product, and let $D_1$ and $D_2$
be Hughes-free field of fractions for $K*G$. Then there is an
isomorphism $D_1 \to D_2$ which is the identity on $K*G$.
\end{theorem}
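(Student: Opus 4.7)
The plan is to build the isomorphism $D_1 \to D_2$ by peeling off the locally indicable structure of $G$ one infinite-cyclic quotient at a time, using the Hughes-free hypothesis exactly where it is needed: to assert that successive skew Laurent polynomial extensions sit faithfully inside the ambient fields of fractions. First, I would reduce to the case where $G$ is finitely generated. Since $D_i$ is a field of fractions for $K*G$, it is the union of the subrings $\mathcal{D}(K*H \subseteq D_i)$ as $H$ ranges over finitely generated subgroups of $G$; each such subring is itself a skew field (division-closedness inside a skew field), and the Hughes-free condition restricts to a Hughes-free condition for $K*H$ (the condition only quantifies over subgroups, and the division closures of $K*N$ for $N \le H$ are the same whether computed in $D_i$ or in $\mathcal{D}(K*H \subseteq D_i)$). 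It therefore suffices to produce, for each finitely generated $H \le G$, a \emph{unique} isomorphism $\phi_H\colon \mathcal{D}(K*H\subseteq D_1)\xrightarrow{\cong}\mathcal{D}(K*H\subseteq D_2)$ extending the identity on $K*H$; uniqueness forces compatibility under inclusions $H \le H'$, and the $\phi_H$ glue.

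For finitely generated $G$, I would argue by transfinite induction on a complexity invariant for finitely generated locally indicable groups that strictly decreases when $G$ is replaced by a finitely generated subgroup of the kernel $N$ of a chosen surjection $G \twoheadrightarrow \mathbb{Z}$. The base case $G = 1$ is immediate. For the inductive step, pick $N \lhd G$ with $G/N = \langle tN\rangle \cong \mathbb{Z}$, write $N = \bigcup_\lambda N_\lambda$ as a directed union of finitely generated (locally indicable) subgroups of strictly smaller complexity, apply the inductive hypothesis to each pair $\mathcal{D}(K*N_\lambda\subseteq D_1)\cong \mathcal{D}(K*N_\lambda\subseteq D_2)$, and glue to obtain an isomorphism $\phi_N\colon E_1\xrightarrow{\cong} E_2$ with $E_i := \mathcal{D}(K*N\subseteq D_i)$, identity on $K*N$. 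Conjugation by $t$ on $K*N$ extends by Lemma~\ref{lem:Lauto} to automorphisms $\sigma_i$ of $E_i$, and uniqueness of $\phi_N$ forces $\phi_N\circ\sigma_1 = \sigma_2\circ\phi_N$.

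The Hughes-free hypothesis now says exactly that $\{t^j : j\in\mathbb{Z}\}$ is $E_i$-linearly independent inside $D_i$, so the subring of $D_i$ generated by $E_i$ and $t^{\pm1}$ is the skew Laurent polynomial ring $E_i[t^{\pm1};\sigma_i]$, a principal ideal domain and in particular an Ore domain. Its Ore field of fractions sits inside $D_i$ and contains $K*G$, so by division closure it equals $D_i$. The data $(\phi_N, t\mapsto t)$ induces a ring isomorphism $E_1[t^{\pm1};\sigma_1]\to E_2[t^{\pm1};\sigma_2]$ which extends uniquely to the Ore fields, producing the required $D_1\xrightarrow{\cong} D_2$ fixing $K*G$. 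Uniqueness at each stage then feeds back into the directed-union gluing.

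The main obstacle is the bookkeeping of the transfinite induction: the complexity invariant has to genuinely decrease when $G$ is replaced by a finitely generated $N_\lambda\le N$ (where $N$ itself need not be finitely generated), and the inductive uniqueness must be robust enough to make the directed-union gluing well-defined at every limit ordinal. The recurring subtlety is that Hughes-freeness must be re-verified at each stage for the restricted skew field $\mathcal{D}(K*H\subseteq D_i)$; this works because Hughes-freeness is formulated in terms of division closures of $K*N'$ inside the ambient skew field, and these are stable under passage to the division closure of $K*H$ for any $H \ge N'$. The uniqueness in the extension step itself --- needed to glue the $\phi_{N_\lambda}$ into $\phi_N$ --- is what makes the whole scheme rigid: a ring endomorphism of $E[t^{\pm1};\sigma]$ that restricts to the identity on $E$ and fixes $t$ is automatically the identity, and this extends to the Ore field.
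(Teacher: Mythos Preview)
The paper does not prove this theorem; it is quoted from Hughes~\cite{Hughes(1970)} (see also~\cite{Dicks-Herbera-Sanchez(2004)}), so there is no in-paper proof to compare against. Your outline captures the right structural ingredients---reduction to finitely generated subgroups, peeling off an infinite-cyclic quotient, recognising $E_i[t^{\pm1};\sigma_i]$ inside $D_i$ via Hughes-freeness, and extending over the Ore field---but the inductive engine has a genuine gap. You invoke ``transfinite induction on a complexity invariant for finitely generated locally indicable groups that strictly decreases when $G$ is replaced by a finitely generated subgroup of the kernel $N$'', without exhibiting such an invariant, and none is evident: if $G$ is free of rank two, the kernel of any surjection $G\to\IZ$ is free of infinite rank and contains finitely generated subgroups isomorphic to $G$ itself, so no group-level complexity can strictly decrease along this descent. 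You yourself flag this as ``the main obstacle'', but flagging it is not resolving it.

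The published proofs sidestep this by attaching complexity not to the group but to individual \emph{elements} of the ambient skew fields, measured roughly by the depth of the recursive tower of division closures needed to produce them; the induction then runs over this element-level complexity, and the real content of Hughes's theorem is the well-definedness of the candidate map at each level, which requires a delicate normal-form analysis. You have the skeleton (and the Ore/uniqueness step is correct), but not the mechanism that makes the induction terminate.
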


Recall that a ring $R$ is called a \emph{fir} (free ideal ring,
\cite[\S 1.6]{Cohn(1995)}) if every left ideal is a free left
$R$-module of unique rank, and every right ideal is a free right
$R$-module of unique rank.  Also, $R$ is call a semifir if the above
condition is only satisfied for all finitely generated left and right
ideals.  It is easy to see that if $K$ is a skew field,
$G$ is the infinite cyclic group and $K*G$ is a crossed product, then
every nonzero left or right ideal is free of rank one and hence $K*G$
is a fir.  We can now apply \cite[Theorem 5.3.9]{Cohn(1995)} (a
result essentially due to Bergman \cite{Bergman(1974)}) to deduce that
if $G$ is a free group and $K*G$ is a crossed product, then $K*G$ is
a fir.

We also need the concept of a \emph{universal} field of fractions;
this is described in \cite[\S 7.2]{Cohn(1985)} and
\cite[\S 4.5]{Cohn(1995)}.  It is proven in \cite[Corollary
7.5.11]{Cohn(1985)} and \cite[Corollary 4.5.9]{Cohn(1995)} that if
$R$ is a semifir, then is has a universal field of fractions $D$.
Furthermore the inclusion $R \subseteq D$ is an honest map
(\cite[p.~250]{Cohn(1985)}, \cite[p.~177]{Cohn(1995)}), fully
inverting (\cite[p.~415]{Cohn(1985)}, \cite[p.~177]{Cohn(1995)}), and
the localization map $R_{\mathcal{D} (R\subseteq D)} \to
D$ is an isomorphism.  We can now state a crucial result of Jacques
Lewin \cite[Proposition 6]{Lewin(1974)}.
\begin{theorem}[Lewin's theorem] \label{thm:Lewin}
Let $K$ be a skew field, let $G$ be a free group, let $K*G$ be a
crossed product, and let $D$ be the universal field of fractions for
$K*G$.  Then $D$ is Hughes-free.
\end{theorem}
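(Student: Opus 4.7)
The plan is to leverage the fact that, for $G$ a free group, $K*G$ and all crossed products $K*H$, $K*N$ over subgroups are firs (by the Bergman-type theorem quoted in the text as \cite[Theorem~5.3.9]{Cohn(1995)}), and then to build a twisted Laurent polynomial ring over a skew field which embeds into $D$ and witnesses the linear independence explicitly.

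First, I would reduce the statement to a concrete one inside $D$. Given $N \lhd H \leq G$ with $H/N$ infinite cyclic generated by $Nt$, the quotient $H/N \cong \IZ$ is free, so the extension $1 \to N \to H \to H/N \to 1$ splits. Pick a splitting to identify $H = N \rtimes \langle t\rangle$; then the crossed product decomposes as $K*H = (K*N)*\langle t \rangle$, a twisted Laurent polynomial ring over $K*N$. The goal becomes: no non-trivial relation $\sum_{i} d_i t^i = 0$ holds in $D$ with $d_i \in \cald(K*N \subseteq D)$.

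Next, I would identify the relevant division closures with universal fields of fractions. By Nielsen-Schreier, every subgroup of a free group is free, so $H$ and $N$ are free; hence $K*H$ and $K*N$ are firs, each with a universal field of fractions $U(K*H)$ and $U(K*N)$ respectively, and the inclusions into these universal fields are honest. Using that $K*G$ is a free left $K*H$-module on coset representatives of $H$ in $G$, and similarly $K*H$ is a free left $K*N$-module on $\{t^i\}_{i\in\IZ}$, the inclusions $K*N \hookrightarrow K*H \hookrightarrow K*G$ are honest. Invoking the universal property of the universal field of fractions (which promotes honest maps between semifirs to embeddings of their universal fields of fractions), one obtains a commutative diagram of inclusions $U(K*N) \hookrightarrow U(K*H) \hookrightarrow U(K*G) = D$. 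Since each inclusion is honest and each $U(\cdot)$ is a skew field containing the respective crossed product, one concludes $U(K*N) = \cald(K*N \subseteq D)$ and $U(K*H) = \cald(K*H \subseteq D)$.

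Then I would construct a twisted Laurent polynomial ring witnessing the linear independence. Conjugation by $t$ is an automorphism of $N$, hence of $K*N$; by functoriality and uniqueness of the universal field of fractions it extends uniquely to an automorphism $\tau$ of $U(K*N)$ which, on $K*N \subseteq D$, agrees with conjugation by $t$ inside $D$. Form the crossed product $R := U(K*N) * \langle t \rangle$ with action $\tau$. Since $U(K*N)$ is a skew field, $R$ is a twisted Laurent polynomial ring over a skew field, hence a PID (in particular a fir), and by the defining relations of the crossed product the family $\{t^i\}_{i \in \IZ}$ is free as a left $U(K*N)$-basis of $R$. The task is now to exhibit an injective ring homomorphism $\Phi \colon R \hookrightarrow D$ extending the inclusion $K*H \hookrightarrow D$; once this is done, any hypothetical relation $\sum d_i t^i = 0$ in $D$ pulls back to such a relation in $R$, forcing all $d_i = 0$.

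The main obstacle is establishing the injectivity of $\Phi$. The candidate $\Phi$ is clear: $U(K*N) \subseteq D$ by the previous step, and $t^i \in K*H \subseteq D$; compatibility of the $\tau$-action (so that $\Phi$ is a ring homomorphism, not merely additive) follows from the fact that conjugation by $t$ in $D$ restricts to $\tau$ on $U(K*N)$, which in turn follows from the uniqueness in the extension of automorphisms to universal fields of fractions. For injectivity, the argument I would pursue is that $R$ is a PID with classical (Ore) field of fractions $Q(R)$, which by Cohn's theory is also the universal field of fractions of $R$, so $R \hookrightarrow Q(R)$ is honest. Viewing $K*H \hookrightarrow R$ as the universal $\Sigma$-inverting localization of $K*H$ at the non-zero elements of $K*N$, this composite $K*H \hookrightarrow R \hookrightarrow Q(R)$ is honest; by the universal property it factors as $K*H \hookrightarrow U(K*H) \hookrightarrow Q(R)$, and the universal property of $U(K*H)$ combined with its identification as $\cald(K*H \subseteq D)$ inside $D$ yields a factorization $R \hookrightarrow Q(R) \cong U(K*H) \hookrightarrow D$ compatible with the inclusion on $K*H$. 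Verifying that the honesty claims at each step are actually valid (in particular, that the universal localization at $(K*N)\setminus\{0\}$ of $K*H$ is honest and coincides with $R$) is the delicate point; this is where I would expect to spend the most care, invoking Cohn's theory of honest maps between semifirs as developed in \cite[\S7.5]{Cohn(1985)} and \cite[\S4.5]{Cohn(1995)}.
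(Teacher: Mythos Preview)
The paper does not give a self-contained proof here: it simply observes that Lewin's original argument for the group algebra $KG$ goes through verbatim for a crossed product $K*G$ once one knows (via Bergman's theorem) that $K*G$ is a fir, and refers the reader to Lemmas~1--6 and Theorem~1 of~\cite{Lewin(1974)}. Your outline is the right shape and is indeed the strategy Lewin follows: identify $\cald(K*N\subseteq D)$ with the universal field of fractions $U(K*N)$, build the twisted Laurent ring $R=U(K*N)*\langle t\rangle$, and embed it in $D$ via $U(K*H)$.

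The gap is in your second paragraph. You assert that the inclusions $K*N\hookrightarrow K*H\hookrightarrow K*G$ are honest \emph{because} the larger ring is free as a module over the smaller. Freeness of $S$ over $R$ does not by itself imply that full matrices over $R$ remain full over $S$; a non-trivial factorisation $A=BC$ over $S$ with inner size $<n$ need not descend to $R$ just because $S$ has an $R$-basis. Establishing this honesty (equivalently, that $U(K*N)$ and $U(K*H)$ genuinely embed in $D=U(K*G)$) is exactly the content of Lewin's Lemmas~1--6, and it uses the specific structure of free-group crossed products, not merely module freeness. You do flag honesty as ``the delicate point'' in your final paragraph, but by then you have already spent it: the identification $U(K*N)=\cald(K*N\subseteq D)$ in paragraph two presupposes that $K*N\to D$ is honest. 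So the proposal is circular as written: the hard step is asserted early and only acknowledged as hard at the end.

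One smaller correction: $R=U(K*N)*\langle t\rangle$ is not the localization of $K*H$ at the non-zero \emph{elements} of $K*N$ (that would be an Ore localization, and $K*N$ is not an Ore domain when $N$ is non-abelian free); it is the universal localization at the set of \emph{full matrices} over $K*N$, which is what~\cite[Lemma~4.5]{Linnell(1993)} actually provides.
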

Actually Lewin only proves the result for $K$ a field and $K*G$ the
group algebra $KG$ over $K$.  However with the remarks above, in
particular that $K*G$ is a fir, we can follow Lemmas 1--6 and Theorem
1 of Lewin's paper \cite{Lewin(1974)} verbatim to deduce
Theorem~\ref{thm:Lewin}.

\begin{lemma} \label{lem:LA} Let $H\lhd G$ be groups and let $G/H
 \in \cala$.  Assume that   $\cald(G;F)$ is a skew field. Write
  $\Sigma = \Sigma(FG \subseteq \calu(G))$ and $\Phi = \Sigma(FH \subseteq  \calu(H))$.  
   Let $\mu \colon FH_{\Phi} \to \calr(H;F)$ and $\lambda \colon FG_{\Sigma}
  \to \calr(G;F)$ be the localization maps which extend the identity on $FH$ and $FG$
  respectively.   Suppose that $\mu$ is an isomorphism.

  Then $\cald(G;F) = \calr(G;F)$, and $\lambda$ is an isomorphism.
\end{lemma}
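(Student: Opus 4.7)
The first assertion $\calr(G;F) = \cald(G;F)$ is immediate from the skew-field hypothesis: in a skew field every matrix is either a zero-divisor or invertible, so any matrix over $\cald(G;F)$ that becomes invertible in $\calu(G)$ is already invertible in $\cald(G;F)$. Combined with the always-valid inclusion $\cald(G;F)\subseteq \calr(G;F)$ this gives equality. As a preliminary step for the isomorphism statement for $\lambda$, I would observe that $\cald(H;F)$ is itself a skew field: it is a division-closed subring of $\calu(G)$ containing $FH$ that sits inside the skew field $\cald(G;F)$, and a division-closed subring of a skew field is a skew field.

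By Lemma~\ref{lem:Lcrossedproduct}, the subring of $\cald(G;F)$ generated by $K:=\cald(H;F)$ and $G$ is a crossed product $K \ast G/H$. The strategy is to identify $\cald(G;F)$ with the universal localization of $K\ast G/H$ inverting $\Sigma':=\Sigma(K\ast G/H\subseteq \calu(G))$, and then, using the hypothesis that $\mu$ is an isomorphism, to descend this identification via the tower
\[
FG \;=\; FH \ast G/H \;\longrightarrow\; FH_{\Phi} \ast G/H \;\cong\; K \ast G/H \;\longrightarrow\; \cald(G;F),
\]
combined with the universal property of $FG_{\Sigma}$, to obtain a two-sided inverse of $\lambda$.

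The required identification of $\cald(G;F)$ with a universal localization of $K\ast G/H$ naturally splits into the two subclasses making up $\cala$. If $G/H$ is a finitely generated free group, then $K\ast G/H$ is a fir by the Bergman-Cohn result recalled before Theorem~\ref{thm:Lewin}, so it admits a universal field of fractions $D$, which by Lewin's Theorem~\ref{thm:Lewin} is Hughes-free. I would then verify that $\cald(G;F)$ itself is a Hughes-free field of fractions of $K\ast G/H$: for each $N \lhd H' \le G/H$ with $H'/N$ infinite cyclic and generator $t$, the powers $\{t^i\mid i\in\IZ\}$ must be shown to be $\cald(K\ast N \subseteq \cald(G;F))$-linearly independent, which one extracts from the absence of hidden algebraic relations inside the skew field $\cald(G;F)$ by a support argument. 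Hughes's Theorem~\ref{thm:Hughes} then gives $D \cong \cald(G;F)$ over $K\ast G/H$, so $\cald(G;F)$ is the universal $\Sigma'$-inverting localization as desired. If instead $G/H$ is amenable, one argues that $K\ast G/H$ is an Ore domain (the domain property is inherited from the ambient skew field, and the Ore condition follows from a F\o{}lner-type argument for crossed products of a skew field by an amenable group) whose classical Ore localization coincides with its division closure in $\cald(G;F)$; since any Ore localization is a universal localization via Cramer's rule and exactness, the same identification follows.

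The main obstacle I anticipate is the verification of the Hughes-free condition in the free case, which requires intrinsic control over the division closures $\cald(K\ast N\subseteq \cald(G;F))$ as $N$ runs through the locally indicable structure of $G/H$; the analogous technical point in the amenable case is establishing the Ore property for $K\ast G/H$. Both ultimately rest on the assumption that $\cald(G;F)$ is a skew field, which rules out the relations that would obstruct either Hughes-freeness or the existence of common Ore multiples, and together with Lemma~\ref{lem:Llocally} and Lemma~\ref{lem:Lfiniteindex} permits reduction to the finitely generated situation where Theorems~\ref{thm:Hughes} and~\ref{thm:Lewin} directly apply.
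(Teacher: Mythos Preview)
Your approach is essentially the paper's: reduce to showing that the localization map $(K\ast G/H)_{\Psi} \to \cald(G;F)$ is an isomorphism and then split into the amenable case (Ore via Tamari/F\o{}lner) and the finitely generated free case (Hughes-freeness of $\cald(G;F)$ plus Theorems~\ref{thm:Hughes} and~\ref{thm:Lewin}). Two minor points: the paper's check of Hughes-freeness is cleaner than your proposed ``support argument''---it simply observes that $\sum_{i\in\IZ}\calu(L)\,t^i$ is direct inside $\calu(G)$, hence so is $\sum_{i\in\IZ}\cald(L;F)\,t^i$---and your closing appeal to Lemmas~\ref{lem:Llocally} and~\ref{lem:Lfiniteindex} is unnecessary here, since $G/H\in\cala$ already means $G/H$ is amenable or finitely generated free (those lemmas enter only later, in the proof of Theorem~\ref{the:Linnells_theorem}).
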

\begin{proof}
  We already know that $\cald(G;F) = \calr(G;F)$ because we are
assuming that $\cald(G;F)$ is a skew field, and clearly
$\lambda$ is an epimorphism.  We need to show that $\lambda$ is
injective.
Lemma~\ref{lem:Lcrossedproduct}
shows that $\cald(H;F) G \cong \cald(H;F)*G/H$ and we will use the
corresponding isomorphism to identify these two rings without further
comment.  Since we are assuming that $\cald(G;F)$ is a skew field,
$\cald(H;F) *G/H$ is a domain.
Furthermore $FG_{\Phi} \cong (FH*G/H)_{\Phi} \cong FH_{\Phi} *G/H$
by Lemma~\ref{lem:Lauto} and \cite[Lemma~4.5]{Linnell(1993)}, 
and we deduce that the localization map $FG_{\Phi} \to \cald(H;F)
*G/H$ is an isomorphism, because we are assuming that $\mu$ is an
isomorphism.
Let $\Psi = \Sigma(\cald(H;F)G \subseteq \cald(G;F))$.
The proof of~\cite[Theorem 4.6]{Schofield(1985)} shows that
  $(FG_{\Phi})_{\Psi} \cong FG_{\Sigma'}$ for a suitable set of
matrices $\Sigma'$ over $FG$ (where we have identified $FG_{\Phi}$
with $\cald(H;F) G$ by the above isomorphisms).
All the matrices in $\Sigma'$ become invertible over
$\calr(G;F)$, so
  by~\cite[Exercise~7.2.8]{Cohn(1985)} we may replace $\Sigma'$ by
its saturation.
It remains to prove that the localization map
$\cald(H;F)G_{\Psi} \to \calr(G;F)$ is injective.

We have two cases to consider, namely $G/H$
  amenable and $G/H$ finitely generated free.  
For the former we 
   apply~\cite[Theorem~6.3]{Dodziuk-Linnell-Mathai-Schick_Yates(2003)}
(essentially a result of Tamari~\cite{Tamari(1957)}).  We deduce that
$\cald(H;F) *G/H$ satisfies the Ore condition for the
multiplicatively closed subset of nonzero elements of
$\cald(H;F)*G/H$ and it follows that the localization map
$\cald(H;F)G_{\Psi} \to \calr(G;F)$ is an isomorphism.

For the latter case, let $L \lhd M$ be subgroups of $G$ containing
$H$ such that $M/L$ is infinite cyclic and let $t \in M$ be a
generator for $M$ mod $L$.  Since the sum $\sum_{i \in \mathbb{Z}}
\mathcal{U}(L)t^i$ is direct, we see that the sum $\sum_{i\in
\mathbb{Z}} \cald(L;F)t^i$ is also direct and we deduce that
$\cald(G;F)$ is a Hughes-free field of fractions for $\cald(H;F)*G/H$.  It now follows from 
Theorems~\ref{thm:Hughes} and~\ref{thm:Lewin} that $\mathcal{D}(G;F)$ is a universal field of
fractions for $\mathcal{D}(H;L)G$ and in particular the localization
map $\cald(H;F)G_{\Psi} \to \calr(G;F)$ is injective.  This finishes
the proof.  \end{proof}

\begin{theorem} \label{the:Linnells_theorem} Let $H\lhd G$ be groups with 
  $H \in  \calx$, $H$ torsionfree and $G/H$ finite.  Let 
  $\Sigma = \Sigma(FG \subseteq   \calu(G))$.  Assume that 
  $\cald(H;F)$ is a skew field.
  
  Then $\cald(G;F) = \calr(G;F)$, and $H$ has the property (UL) with respect to $F$, i.e., 
  the localization map   $FG_{\Sigma} \to \calr(G;F)$ is an isomorphism.
\end{theorem}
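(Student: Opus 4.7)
The plan is to reduce via Lemma~\ref{lem:Lfiniteindex} to showing that the torsionfree subgroup $H$ itself has property (UL), and then to prove this by transfinite induction on the ordinal filtration $\calx = \bigcup_{a \ge 0} \calx_a$ provided by Lemma~\ref{lem:Ldescription}\eqref{lem:Ldescription:b}. Since $\cald(H;F)$ is a skew field, Theorem~\ref{the:Status_of_the_Atiyah_Conjecture}\eqref{the:Status_of_the_Atiyah_Conjecture:skew_field} shows that $H$ satisfies the Atiyah Conjecture with coefficients in $F$, and combining Theorem~\ref{the:Status_of_the_Atiyah_Conjecture}\eqref{the:Status_of_the_Atiyah_Conjecture:subgroups} with Lemma~\ref{lem:Ldescription}\eqref{lem:Ldescription:c} one sees that every subgroup $K \le H$ is torsionfree, lies in $\calx$, and has $\cald(K;F)$ a skew field. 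Lemma~\ref{lem:Lfiniteindex} then immediately delivers the first conclusion $\cald(G;F) = \calr(G;F)$ and shows that $\lambda$ is an isomorphism provided the corresponding localization map $\mu \colon FH_{\Phi} \to \cald(H;F)$ is one, so it suffices to prove that $H$ has property (UL).

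I would establish the following claim by transfinite induction on the least ordinal $a$ with $K \in \calx_a$: every torsionfree group $K \in \calx_a$ with $\cald(K;F)$ a skew field has property (UL) with respect to $F$. The base case $K = \{1\}$ is trivial, and the limit-ordinal case is handled by $\calx_a = \bigcup_{b<a}\calx_b$. For a successor $a = b+1$ one has $K \in \Loc(\calx_b \cala)$, so every finite subset of $K$ sits in a subgroup of $K$ that lies in $\calx_b\cala$. A short verification shows that in fact every finitely generated subgroup $K_\gamma$ of $K$ is itself in $\calx_b \cala$: indeed, if $K_\gamma \le L$ with $M \lhd L$, $M \in \calx_b$, $L/M \in \cala$, then $K_\gamma \cap M \in \calx_b$ by Lemma~\ref{lem:Ldescription}\eqref{lem:Ldescription:a} and $K_\gamma/(K_\gamma \cap M) \hookrightarrow L/M$ is a finitely generated subgroup of an $\cala$-group, hence again in $\cala$. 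Thus $K$ is the directed union of the finitely generated subgroups $K_\gamma$, each of which carries a normal subgroup $M_\gamma \in \calx_b$ with $K_\gamma/M_\gamma \in \cala$, and each $M_\gamma$ and $K_\gamma$ is torsionfree with skew-field $\cald$ by the first paragraph. The inductive hypothesis applied to $M_\gamma$ gives $M_\gamma$ property (UL); Lemma~\ref{lem:LA} applied to $M_\gamma \lhd K_\gamma$ promotes property (UL) to $K_\gamma$; and Lemma~\ref{lem:Llocally} applied to $K = \bigcup_\gamma K_\gamma$ promotes it to $K$.

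The main obstacle is the subgroup-closure bookkeeping at the successor step: one must check that finitely generated subgroups of a $\calx_b \cala$-group are again in $\calx_b \cala$, which relies on the elementary facts that finitely generated subgroups of amenable groups remain amenable and finitely generated subgroups of free groups remain finitely generated free, together with Lemma~\ref{lem:Ldescription}\eqref{lem:Ldescription:a}. All the ambient skew-field, torsionfree, and $\cala$-quotient hypotheses required by Lemma~\ref{lem:LA} at each stage are then guaranteed by the combination of subgroup-closure of $\calx$ and subgroup-closure of the Atiyah Conjecture assembled in the first paragraph, after which the induction runs without further difficulty, yielding property (UL) for $H$ and, via Lemma~\ref{lem:Lfiniteindex}, for $G$.
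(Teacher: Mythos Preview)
Your proof is correct and follows the same strategy as the paper: reduce to the case $G=H$ via Lemma~\ref{lem:Lfiniteindex}, then run transfinite induction on the filtration $\calx = \bigcup_a \calx_a$, handling the successor step with Lemma~\ref{lem:Llocally} followed by Lemma~\ref{lem:LA}. Your explicit verification that finitely generated subgroups of $K \in \Loc(\calx_b\cala)$ again lie in $\calx_b\cala$ (so as to obtain a genuinely directed union for Lemma~\ref{lem:Llocally}) and your explicit invocation of subgroup-closure of the Atiyah Conjecture to guarantee the skew-field hypotheses at each stage are details the paper leaves implicit, but the overall argument is the same.
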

\begin{proof}
  We first consider the special case $G=H$ (so $G$ is torsionfree).  We use the
  description of the class of groups $\calx$ given in
  Lemma~\ref{lem:Ldescription}~\eqref{lem:Ldescription:b} and prove the result by
  transfinite induction.  The result is obvious if $G \in \calx_0$, because then $G = 1$. The induction
  step is done as follows.  Consider an ordinal $b$ with $b \not=0$ and a group 
  $G \in  \calx_b$ such that the claim already known for all groups $H \in \calx_a$
  for all ordinals $a < b$.  We have to show the claim for $G$.  If $b$ is a limit ordinal,
  this is obvious since $G$ belongs to $\calx_a$ for every ordinal $a < b$.  It
  remains to treat the case where $b$ is not a limit ordinal.  Then 
  $G \in   \Loc(\calx_a \cala)$ for some ordinal $a < b$.  By Lemma~\ref{lem:Llocally},
it is sufficient to consider the case $G \in \calx_a\cala$.  Now apply
  Lemma~\ref{lem:LA}.

  The general case when $G$ is not necessarily equal to $H$ now follows from
  Lemma~\ref{lem:Lfiniteindex}.
\end{proof}

There are many groups for which Theorem 2.14 can be applied, some of
which we now describe.  Let $N$ be either an Artin pure braid group,
or a RAAG, or a subgroup of finite index in a right-angled
Coxeter group.  Let $\overline{\mathbb{Q}}$ denote the field of all
algebraic numbers.  We can now state
\begin{theorem}
Let $G$ be a group which contains $N$ as a normal subgroup such that
$G/N$ is elementary amenable, and let $\Sigma = \Sigma(FG \subseteq
\calu(G))$.  Assume that $G$ contains a torsionfree
subgroup of finite index  and that $F$ is a subfield of
$\overline{\mathbb{Q}}$.
Then the localization map $FG_{\Sigma} \to \calr(G;F)$ is an
isomorphism, i.e.~$G$ has property (UL) with respect to $F$.
\end{theorem}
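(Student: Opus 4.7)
My plan is to prove the theorem by reducing to Linnell's theorem (Theorem~\ref{the:Linnells_theorem}) and the bootstrap lemmas~\ref{lem:LA} and~\ref{lem:Lfiniteindex}, working inside a torsionfree normal subgroup of finite index in $G$. By hypothesis $G$ contains a torsionfree subgroup of finite index; replacing it by its normal core produces $G_1 \lhd G$ of finite index with $G_1$ torsionfree. Set $N_0 := G_1 \cap N \lhd G_1$. Then $N_0$ is torsionfree, has finite index in $N$, and $G_1/N_0 \hookrightarrow G/N$ is elementary amenable, hence lies in the class $\cala$.

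The chain of deductions is then straightforward and uniform in the three cases. First, in each case the group $N$ belongs to the class $\calx$ of Lemma~\ref{lem:Ldescription}: pure braid groups by iterated Fadell-Neuwirth fibrations, RAAGs via their iterated graph-product decompositions, and finite-index subgroups of right-angled Coxeter groups by Davis-Januszkiewicz combined with Lemma~\ref{lem:Ldescription}\eqref{lem:Ldescription:c}. Hence $N_0 \in \calx$ by subgroup closure. Provided $\cald(N_0;F)$ is a skew field, Theorem~\ref{the:Linnells_theorem} applied with $H = G = N_0$ shows that $N_0$ has property (UL) with respect to $F$ and that $\calr(N_0;F) = \cald(N_0;F)$. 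Provided further that $\cald(G_1;F)$ is a skew field, Lemma~\ref{lem:LA} applied to $N_0 \lhd G_1$ (with $G_1/N_0 \in \cala$) promotes property (UL) from $N_0$ to $G_1$ and gives $\cald(G_1;F) = \calr(G_1;F)$. Finally Lemma~\ref{lem:Lfiniteindex} applied to $G_1 \lhd G$ with $G/G_1$ finite transfers property (UL) from $G_1$ to the full group $G$, which is the required conclusion.

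The principal obstacle is therefore the skew-field hypothesis on $\cald(N_0;F)$ and $\cald(G_1;F)$, equivalently, by Theorem~\ref{the:Status_of_the_Atiyah_Conjecture}\eqref{the:Status_of_the_Atiyah_Conjecture:skew_field}, the Atiyah Conjecture for $N_0$ and $G_1$ with coefficients in $F$. Since $F \subseteq \overline{\IQ}$, it is enough to establish the Atiyah Conjecture with coefficients in $\overline{\IQ}$, and I will do this via Theorem~\ref{the:Status_of_the_Atiyah_Conjecture}\eqref{the:Status_of_the_Atiyah_Conjecture:approx} by placing both groups in the class $\cald$ described there. Each of the three types of $N$ is residually torsion-free nilpotent, hence residually torsion-free elementary amenable, so $N \in \cald$; subgroup closure of $\cald$ gives $N_0 \in \cald$. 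For $G_1$, the defining extension axiom of $\cald$ applied to $p \colon G_1 \twoheadrightarrow G_1/N_0$ — a surjection with torsionfree domain onto the elementary amenable group $G_1/N_0$ — requires that $p^{-1}(B) \in \cald$ for every finite subgroup $B \le G_1/N_0$. These preimages are torsionfree finite extensions of $N_0 \in \cald$, and showing that they themselves lie in $\cald$ is the most delicate part of the argument: I expect to handle it by an inductive application of the same extension axiom (taking the intermediate kernel to be $N_0$ and the amenable quotient to be the finite group $B$), exploiting that the preimage of the trivial subgroup is $N_0$ itself. Once $G_1 \in \cald$ is obtained, both skew-field hypotheses follow and the chain in the previous paragraph closes.
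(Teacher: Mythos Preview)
Your reduction via $N_0 \lhd G_1 \lhd G$ is sound in outline, but the step you yourself flag as ``most delicate'' is in fact a genuine gap. To show $p^{-1}(B) \in \cald$ for finite $B \le G_1/N_0$ you propose applying the extension axiom of Theorem~\ref{the:Status_of_the_Atiyah_Conjecture}\eqref{the:Status_of_the_Atiyah_Conjecture:approx} to $q \colon p^{-1}(B) \to B$. But the axiom requires $q^{-1}(C) \in \cald$ for \emph{every} finite subgroup $C$ of the quotient, and when the quotient $B$ is itself finite this includes $C = B$, i.e.\ it asks for $p^{-1}(B) \in \cald$ as a hypothesis. (If one reads ``$B \subset A$'' as \emph{proper} finite subgroups, then applying the axiom to $G \to \{1\}$ would place every torsionfree group in $\cald$, which is absurd.) So no induction on $|B|$ gets off the ground: the class $\cald$ as defined here gives you nothing for torsionfree finite extensions of a $\cald$-group. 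This is not a technicality---passing the Atiyah Conjecture through torsionfree finite extensions is a substantial problem in its own right.

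The paper does not attempt to squeeze this out of the formal class $\cald$. Instead it observes that $N$ is poly-free in each of the three cases (pure braid groups via Fadell--Neuwirth, RAAGs by Hermiller--\v{S}uni\'{c}, finite-index subgroups of right-angled Coxeter groups via the Davis--Januszkiewicz embedding into a RAAG), so $G \in \calx$ and hence the torsionfree normal subgroup $H \lhd G$ of finite index lies in $\calx$ by Lemma~\ref{lem:Ldescription}\eqref{lem:Ldescription:c}. For the Atiyah Conjecture for $H$ over $\overline{\IQ}$ it then invokes the specific structural results of Linnell--Schick, Linnell--Okun--Schick, and Schreve, which were designed precisely to handle torsionfree finite and elementary-amenable extensions of these particular groups. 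With $H \in \calx$ and $\cald(H;F)$ a skew field in hand, a single application of Theorem~\ref{the:Linnells_theorem} to $H \lhd G$ finishes the proof---there is no need for the intermediate passage through $N_0$ and Lemmas~\ref{lem:LA} and~\ref{lem:Lfiniteindex}.
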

\begin{proof}
First we recall some group theoretic results.
An Artin pure braid group is poly-free, see
e.g.~\cite[\S 2.4]{Rolfsen(2010)}, and RAAG's are poly-free by
\cite[Theorem A]{Hermiller-Sunic(2007)}.  Finally right-angled
Coxeter groups have a characteristic subgroup of index a power of
2 which is isomorphic to a subgroup of a right-angled Artin group
\cite[Proposition 5~(2)]{Linnell-Okun-Schick(2012)} and therefore
this subgroup is poly-free.  This shows that
in all cases $G \in \mathcal{X}$
and hence any subgroup of $G$ is in $\mathcal{X}$, because
$\mathcal{X}$ is subgroup closed by
Lemma~\ref{lem:Ldescription}~\eqref{lem:Ldescription:c}.

Now let $H$ be a torsionfree normal subgroup of finite index in $G$.
We need to show that $H$ satisfies the Atiyah conjecture with
coefficients in $F$.  We may assume that $F = \overline{\mathbb{Q}}$.
For the case $N$ is an Artin pure braid group, this
follows from \cite[Corollary 5.41]{Linnell-Schick(2007)}.  For the
case $N$ is a RAAG, this follows from \cite[Theorem
2]{Linnell-Okun-Schick(2012)}.  Finally if $N$ is a subgroup of finite
index in a right-angled Coxeter group, this follows from
\cite[Theorem 2 and Proposition 5 (2)]{Linnell-Okun-Schick(2012)} and
\cite[Theorem 1.1]{Schreve(2014)}.
\end{proof}


\subsection{The property (ULA)}
\label{subsec:The_property_(ULA)}

\begin{definition}[Property (ULA)]
We say that a torsionfree  group $G$ has the property (ULA) with respect to the subfield $F \subseteq \IC$,
if the canonical epimorphism
\[
\lambda \colon R_{\Sigma(FG \subseteq \calr(G;F))} \to \calr(G;F)
\]
is bijective, and $\cald(G;F)$ is a skew field.
\end{definition}

Given a torsionfree group $G$, recall from
Theorem~\ref{the:Status_of_the_Atiyah_Conjecture}~%
\eqref{the:Status_of_the_Atiyah_Conjecture:skew_field} that $\cald(G;F)$ is a skew field
if and only $G$ satisfies the Atiyah Conjecture with coefficients in $F$ and that we have
$\cald(G;F) = \calr(G;F)$ provided that $\cald(G;F)$ is a skew field.  So $G$ satisfies
condition  (ULA) with respect to $F$ if and only if $G$ satisfies both condition (UL) with 
respect to $F$ and the Atiyah Conjecture with coefficients in $F$.

\begin{theorem}[Groups in $\calc$ have property (ULA)]
\label{the:groups_calc_have_property_(ULA)}
  Let $\calc$ be the smallest
  class of groups which contains all free groups and is closed under directed unions and
  extensions with elementary amenable quotients.  Suppose that $G$ is a torsionfree group
  which belongs to $\calc$. 

  Then $G$ has property (ULA).
\end{theorem}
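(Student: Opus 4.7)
The plan is to combine Theorem~\ref{the:Linnells_theorem} with the status results on the Atiyah Conjecture in Theorem~\ref{the:Status_of_the_Atiyah_Conjecture}. By the equivalence noted just before the statement, property (ULA) for a torsionfree group $G$ means (UL) together with the Atiyah Conjecture with coefficients in $F$, and the Atiyah Conjecture implies $\cald(G;F) = \calr(G;F)$ is a skew field. So the task is to verify each of these two ingredients for a torsionfree $G \in \calc$.

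First I would handle the Atiyah Conjecture part. By Theorem~\ref{the:Status_of_the_Atiyah_Conjecture}~\eqref{the:Status_of_the_Atiyah_Conjecture:Linnell} a torsionfree $G \in \calc$ satisfies the Atiyah Conjecture with coefficients in $\IC$, hence by~\eqref{the:Status_of_the_Atiyah_Conjecture:subfields} also with coefficients in our subfield $F \subseteq \IC$. Then~\eqref{the:Status_of_the_Atiyah_Conjecture:skew_field} gives that $\cald(G;F)$ is a skew field, which together with the remark in the paragraph preceding Lemma~\ref{lem:Lfiniteindex} (and~\eqref{the:Status_of_the_Atiyah_Conjecture:cald_is_calr}) yields $\cald(G;F) = \calr(G;F)$.

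Next I would establish property (UL). The key observation is that $\calc \subseteq \calx$, where $\calx$ is the class introduced just before Lemma~\ref{lem:Ldescription}. Indeed, every free group is a directed union of finitely generated free groups, and finitely generated free groups as well as elementary amenable (in particular amenable) groups belong to $\cala \subseteq \calx$; since $\calx$ is closed under directed unions and under arbitrary group extensions, a straightforward transfinite induction on the construction of $\calc$ places every group in $\calc$ inside $\calx$. Having $G \in \calx$ torsionfree together with $\cald(G;F)$ a skew field, I apply Theorem~\ref{the:Linnells_theorem} to the trivial extension $G \lhd G$ (with $G/G$ finite). This yields that the localization map $FG_{\Sigma} \to \calr(G;F)$ is an isomorphism, which is precisely property (UL) with respect to $F$.

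Combining the two parts, $G$ has property (ULA) with respect to $F$. The only potentially non-routine step is the verification $\calc \subseteq \calx$, but this is immediate once one observes that $\calx$ contains all the building blocks (free and elementary amenable groups) and has the requisite closure properties; in particular, the extensions allowed in the definition of $\calc$ are a special case of the arbitrary extensions allowed in $\calx$. Thus the entire proof reduces to assembling results already established in the paper, with no new obstacle to overcome.
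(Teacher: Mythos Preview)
Your proof is correct and follows essentially the same route as the paper: the paper's own proof is the one-line observation that the result follows from Theorem~\ref{the:Status_of_the_Atiyah_Conjecture}~\eqref{the:Status_of_the_Atiyah_Conjecture:skew_field} and~\eqref{the:Status_of_the_Atiyah_Conjecture:Linnell} together with Theorem~\ref{the:Linnells_theorem}, using the evident inclusion $\calc \subseteq \calx$. You have simply unpacked this, additionally making explicit the passage from $\IC$ to $F$ via Theorem~\ref{the:Status_of_the_Atiyah_Conjecture}~\eqref{the:Status_of_the_Atiyah_Conjecture:subfields} and spelling out why $\calc \subseteq \calx$; one small imprecision is the phrase ``arbitrary group extensions'' for $\calx$, since $\calx$ is closed under extensions with quotient in $\calx$, but your argument uses exactly this (elementary amenable $\Rightarrow$ amenable $\Rightarrow$ in $\cala \subseteq \calx$), so no harm is done.
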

\begin{proof}
This follows from 
Theorem~\ref{the:Status_of_the_Atiyah_Conjecture}~\eqref{the:Status_of_the_Atiyah_Conjecture:skew_field}
and~\eqref{the:Status_of_the_Atiyah_Conjecture:Linnell} 
and Theorem~\ref{the:Linnells_theorem} since obviously $\calc \subseteq \calx$.
\end{proof}


\typeout{------------------------   Section 3: Proof of the main Theorem Groups  ----------------------}

\section{Proof of the main Theorem~\ref{the:Whw(G)_and_units_in_cald(G;IQ)}}
\label{sec:Proof_of_the_main_Theorem}

Next we explain why we are interested in group with properties (ULA) by proving our main 
Theorem~\ref{the:Whw(G)_and_units_in_cald(G;IQ)} which will be a direct consequence
of Theorems~\ref{the:groups_calc_have_property_(ULA)} 
and~\ref{the:K_1_w(ZG)_and_K_1(cald(G))_for_groups_with_property_(ULA)}.

\begin{definition}[$K^w_1(RG)$]
\label{def:K_1_prime(R,Sigma)} 
Let $G$ be a group, let $R$ be a ring with $\IZ \subseteq R \subseteq \IC$,
and denote by $F \subseteq \IC$ its quotient field.  Let
\[
K_1^w(RG)
\]
 be the abelian group defined in terms of generators and
relations as follows.  Generators $[f]$ are given by (conjugacy classes) of $RG$-endomorphisms $f \colon P \to P$ 
of finitely generated projective $RG$-modules $P$ such that 
$\omega_* f  \colon \omega_* P \to \omega_* P$ is a $\cald(G;F)$-isomorphism
for the inclusion $\omega \colon RG \to \cald(G;F)$.  
If $f,g \colon P \to P$ are $RG$-endomorphisms
of the same finitely generated projective $RG$-module $P$ such that $\omega_* f$ and 
$\omega_* g$ are bijective, then we require the relation
\[
[g \circ f] = [g] + [f].
\]
If we have a commutative diagram of finitely generated projective $RG$-modules
with exact rows
\[
\xymatrix{0 \ar[r] 
&
P_0 \ar[r]^{i} \ar[d]_{f_0}
& 
P_1 \ar[r]^{p} \ar[d]_{f_1}
& 
P_2 \ar[r] \ar[d]_{f_2}
&
0
\\
0 \ar[r] 
&
P_0 \ar[r]^{i} 
& 
P_1 \ar[r]^{p}
& 
P_2 \ar[r]
&
0
}
\]
such that $\omega_*  f_0$, $\omega_* f_1$, and $\omega_* f_2$  are bijective, then we require
the relation
\[
[f_1] = [f_0] + [f_2].
\]
Furthermore, define
\begin{eqnarray*}
\widetilde{K}_1^w(RG) 
& := & 
\coker\bigl( \{\pm 1 \}  \xrightarrow{\cong} K_1(\IZ) \to K_1(\IZ G) \to K_1^w(RG)\bigr);
\\
\Wh^w(G;R) 
& = & 
\coker\bigl( \{\pm g \mid g \in G\} \to K_1(\IZ G) \to K_1^w(RG)\bigr);
\\
\Wh^w(G) 
& = & \Wh^w(G;\IZ); 
\\
\widetilde{K}_1(\calr(G;F)) 
& := & 
\coker\bigl( \{\pm 1 \} \xrightarrow{\cong} K_1(\IZ) \to K_1(\IZ G) \to K_1(\calr(G;F))\bigr);
\\
\Wh(\calr(G;F)) 
& = & 
\coker\bigl( \{\pm g \mid g \in G\} \to K_1(\IZ G) \to K_1(\calr(G;F))\bigr).
\end{eqnarray*}
\end{definition}

\begin{remark} Let $A$ be a square matrix over $RG$. Then the square matrix $\omega(A)$
  over $\cald(G;F)$ is invertible if and only if the operator 
  $r_A^{(2)} \colon  L^2(G)^n \to L^2(G)^n$ given by right multiplication with $A$ is a weak isomorphism,
  i.e., it is injective and has dense image.  This follows from the conclusion
  of~\cite[Theorem~6.24 on page~249 and Theorem~8.22~(5) on page~327]{Lueck(2002)} that
  $r_A^{(2)}$ is a weak isomorphisms if and only if it becomes invertible in $\calu(G)$.
\end{remark}

There is a Dieudonn\'e determinant for invertible matrices over a skew field $D$ which takes
values in the abelianization of the group of units $D^{\times}/[D^{\times},D^{\times}]$ 
and induces an isomorphism, see~\cite[Corollary~43 on page~133]{Silvester(1981)} 
\begin{eqnarray}
{\det}_D \colon K_1(D) 
& \xrightarrow{\cong}  &
D^{\times}/[D^{\times},D^{\times}]. 
\label{K_1(K)_Dieudonne}
\end{eqnarray}
The inverse 
\begin{eqnarray}
J_D \colon  D^{\times}/[D^{\times},D^{\times}] & \xrightarrow{\cong} & K_1(D) 
\label{K_1(K)_Dieudonne_inverse}
\end{eqnarray}
sends the class of a unit in $D$ to the class of the corresponding $(1,1)$-matrix.

\begin{theorem}[$K_1^w(FG)$ for groups with property {(ULA) with respect to $F$}]
\label{the:K_1_w(ZG)_and_K_1(cald(G))_for_groups_with_property_(ULA)}
Let $R$ be a ring with $\IZ \subseteq R\subseteq \IC$. Denote by $F \subseteq \IC$ 
the quotient field of $R$.  Let $G$ be a torsionfree group with the property (ULA) with
respect to $F$.

Then the canonical maps
sending $[f]$ to $[\omega_*f]$
\begin{eqnarray*}
\omega_*  \colon K_1^w(RG) & \xrightarrow{\cong} & K_1(\cald(G;F));
\\
\omega_*  \colon \widetilde{K}_1^w(RG) & \xrightarrow{\cong} & \widetilde{K}_1(\cald(G;F));
\\
\omega_*  \colon \Wh^w(G;R) & \xrightarrow{\cong} & \Wh(\cald(G;F)).
\end{eqnarray*}
are bijective. Moreover, $\cald(G;F)$ is a skew field and the Dieudonne determinant
induces an isomorphism
\[
{\det}_D \colon K_1(\cald(G;F)) \xrightarrow{\cong} \cald(G;F)^{\times}/[\cald(G;F)^{\times},\cald(G;F)^{\times}].
\]
\end{theorem}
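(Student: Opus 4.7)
The plan is first to observe that the definition of $K_1^w(RG)$ matches the definition of $K_1(RG,\Sigma)$ from Definition~\ref{def:K_1(R,Sigma)}, where $\Sigma$ denotes the saturated set of endomorphisms of finitely generated projective $RG$-modules that become invertible after tensoring along $\omega \colon RG \to \cald(G;F)$. Since $R \subseteq F \subseteq \cald(G;F)$, the ring map $\omega$ is injective, so Theorem~\ref{the:K_1(R,Sigma)_and_K_1(R_Sigma)} applies and yields an isomorphism
\[
K_1^w(RG) \;=\; K_1(RG,\Sigma) \;\xrightarrow{\cong}\; K_1(RG_\Sigma).
\]
The remaining task is therefore to identify $RG_\Sigma$ with $\cald(G;F)$.

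To do so, I would first note that every nonzero element of $R$ is central in $RG$ and becomes invertible in the subfield $F \subseteq \cald(G;F)$, so (after saturation) it lies in $\Sigma$. Consequently the Ore localization of $RG$ at the central multiplicative set $R \setminus \{0\}$, namely $FG$, receives a canonical factorization $RG \to FG \to RG_\Sigma$. Next, given any square matrix $A$ over $FG$ that becomes invertible over $\cald(G;F)$, one can clear denominators to write $A = r^{-1}B$ for some nonzero $r \in R$ and some square matrix $B$ over $RG$; since both $r$ and $A$ are invertible over $\cald(G;F)$, so is $B$, hence $B \in \Sigma$, and therefore $A$ becomes invertible in $RG_\Sigma$. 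Consequently the canonical map $FG \to RG_\Sigma$ inverts the set $\Sigma(FG \subseteq \cald(G;F))$. By property (ULA) applied to $FG$ together with Theorem~\ref{the:Status_of_the_Atiyah_Conjecture}~\eqref{the:Status_of_the_Atiyah_Conjecture:skew_field} (so that $\cald(G;F) = \calr(G;F)$), the localization map $FG_{\Sigma(FG \subseteq \cald(G;F))} \to \cald(G;F)$ is an isomorphism, and hence the universal property supplies a ring map $\cald(G;F) \to RG_\Sigma$. A formal check against the universal property of $RG_\Sigma$ on one side, and against the above description of the localization of $FG$ on the other, shows that this map is inverse to the canonical surjection $RG_\Sigma \to \cald(G;F)$.

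The statements for $\widetilde{K}_1^w(RG)$ and $\Wh^w(G;R)$ then follow formally from the isomorphism on $K_1^w(RG)$: the defining cokernel sequences of $\widetilde{K}_1^w(RG)$ and $\widetilde{K}_1(\cald(G;F))$, respectively of $\Wh^w(G;R)$ and $\Wh(\cald(G;F))$, agree because the images of $\pm 1$ (respectively of the trivial units $\pm g$, $g \in G$) are identified under $\omega_*$. The final statement about the Dieudonn\'e determinant is then the content of the classical isomorphism~\eqref{K_1(K)_Dieudonne} applied to the skew field $D = \cald(G;F)$, whose skew-field property is guaranteed once more by property (ULA).

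The main obstacle is the identification of the universal localization $RG_\Sigma$ with $\cald(G;F)$: property (UL) delivers this over $FG$, and passing from $FG$ down to $RG$ requires the denominator-clearing argument above, so that the saturation of $\Sigma$ is already rich enough to invert every square matrix over $FG$ that becomes invertible over $\cald(G;F)$. Once this identification is in place, the statement of the theorem reduces to the already-established Theorem~\ref{the:K_1(R,Sigma)_and_K_1(R_Sigma)} together with the Dieudonn\'e determinant.
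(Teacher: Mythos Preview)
Your proposal is correct and follows the same approach as the paper: the paper's proof is the single sentence ``This follows directly from Theorem~\ref{the:K_1(R,Sigma)_and_K_1(R_Sigma)}.'' You have supplied the details the paper suppresses, in particular the identification of $RG_\Sigma$ with $\cald(G;F)$ via the denominator-clearing argument that reduces the $RG$-localization to the $FG$-localization and then invokes property~(ULA); this step is indeed needed, since (ULA) is formulated for $FG$ rather than $RG$, and your treatment of it is sound.
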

\begin{proof}
This follows directly from Theorem~\ref{the:K_1(R,Sigma)_and_K_1(R_Sigma)}.
\end{proof}

Finally we can give the proof of Theorem~\ref{the:Whw(G)_and_units_in_cald(G;IQ)}.

\begin{proof}[Proof of Theorem~\ref{the:Whw(G)_and_units_in_cald(G;IQ)}]
  Because of Theorem~\ref{the:groups_calc_have_property_(ULA)} the group $G$ has property
  (ULA) and we can apply Theorem~\ref{the:K_1(R,Sigma)_and_K_1(R_Sigma)}. It remains to
  explain why in the special case $R = \IZ$ the group $K_1^w(\IZ G)$ as appearing in
  Theorem~\ref{the:K_1(R,Sigma)_and_K_1(R_Sigma)}, namely, as introduced in
  Definition~\ref{def:K_1_prime(R,Sigma)} agrees with the group $K_1^w(\IZ G)$ appearing
  in the introduction. This boils down to explain why for a $(n,n)$-matrix $A$ over $\IZ   G$ 
  the operator $r_A^{(2)} \colon L^2(G)^n \to L^2(G)^n$ is a weak isomorphism if and
  only if $A$ becomes invertible in $\cald(G;\IQ)$. By definition $A$ is invertible in $\cald(G;\IQ)$
  if and only if it is invertible in $\calu(G)$. Now 
  apply~\cite[Theorem~6.24 on page 249 and Theorem~8.22~(5) on page~327]{Lueck(2002)}.
  \end{proof}





\typeout{-------------------------------------- References  --------------------------------------}



\end{document}